\newtheorem{theorem}{Theorem}[section]
\newtheorem{Proposition}[theorem]{Proposition}
\newtheorem{lemma}[theorem]{Lemma}
\newtheorem{conjecture}[theorem]{Conjecture}
\newtheorem{Corollary}[theorem]{corollary}
\newtheorem{remark}[theorem]{Remark}
\numberwithin{equation}{section}
\begin{document}

\title{BERNOULLI OPERATOR AND RIEMANN'S ZETA FUNCTION}

\author{Yiping Yu}
 \email{yipingyu1@126.com}


\subjclass[2010]{Primary 11M06, Secondary 11M26,11B68}



\keywords{Bernoulli operator,Ramanujan summation,divergent series,Bernoulli operator equivalent,Bernoulli operator zero(pole),Riemann Hypothesis}

\begin{abstract}
We introduce a Bernoulli operator,let $\mathbf{B}$ denote the operator symbol,for n=0,1,2,3,... let ${\mathbf{B}^n}: = {B_n}$ (where ${B_n}$ are Bernoulli numbers,${B_0} = 1,B{}_1 = 1/2,{B_2} = 1/6,{B_3} = 0$...).We obtain some formulas for Riemann's Zeta function,Euler constant and a number-theoretic function relate to Bernoulli operator.For example,we show that
\[{\mathbf{B}^{1 - s}} = \zeta (s)(s - 1),\]
\[\gamma  =  - \log \mathbf{B},\]where ${\gamma}$ is Euler constant.Moreover,we obtain an analogue of the Riemann Hypothesis (All zeros of the function $\xi (\mathbf{B} + s)$ lie on the imaginary axis).This hypothesis can be generalized to Dirichlet L-functions,Dedekind Zeta function,etc.In particular,we obtain an analogue of Hardy's theorem(The function $\xi (\mathbf{B} + s)$ has infinitely many zeros on the imaginary axis).
\par In addition,we obtain a functional equation of $\log \Pi (\mathbf{B}s)$ and a functional equation of $\log \zeta (\mathbf{B} + s)$ by using Bernoulli operator.
\end{abstract}

\maketitle



\section{\textbf{Introduction}}
We introduce a Bernoulli operator,let $\mathbf{B}$ denote the operator symbol, for n=0,1,2,3, ... let ${\mathbf{B}^n}: = {B_n}$ (where ${B_n}$ are Bernoulli numbers,${B_0} = 1,B{}_1 = 1/2,{B_2} = 1/6,{B_3} = 0$...).Despite the fact that Bernoulli defined $B{}_1 = 1/2$, some authors set $B{}_1 = -1/2$.In this paper,it will be convenient to set $B{}_1 = 1/2$.Using the operator,we are easy to obtain the equation ${(1 - \mathbf{B})^n} = {\mathbf{B}^n}$ (n is non-negative integer),and the following equation
\[{e^{ - \mathbf{B}z}} = \frac{z}{{{e^z} - 1}}.\]
\par First we introduce some definitions.
\par If a function's (equation's) expression contains the operator symbol "$\mathbf{B}$",then we say the function(equation) is a \textbf{Bernoulli operator function (equation)} or simply a function (equation).For example,the function $f(z)= e^{ - \mathbf{B}z}$ is a Bernoulli operator function.
\par If a Bernoulli operator function (equation) equal to another Bernoulli operator function (equation) without taking Bernoulli operator,we say the function (equation) is \textbf{primitive equivalent}.For example,the following equation is primitive equivalent:
\\$\sin (\pi \mathbf{B}/2) = \sin (\pi \mathbf{B}/2 + 2\pi ).$
\par If a Bernoulli operator function (equation) equal to another Bernoulli operator function (equation) by taking Bernoulli operator, we say the function (equation)is \textbf{Bernoulli operator equivalent}.For example,the following equation is Bernoulli operator equivalent:
\\$\sin [\pi (1-\mathbf{B})/2]=\sin (\pi \mathbf{B}/2) = \pi /4$(Note that we have used the equation ${(1 - \mathbf{B})^n} = {\mathbf{B}^n}$,n is non-negative integer).
\par The zeros (poles , singularity) of a Bernoulli operator function without taking Bernoulli operator are called \textbf{primitive zeros (poles , singularity)}.For example,the primitive zeros of function $f(z)=\sin [\pi (\mathbf{B} + z)/2]$ are -$\mathbf{B}$+4k (k is an integer);
\par The zeros (poles , singularity) of a Bernoulli operator function by taking Bernoulli operator are called \textbf{Bernoulli operator zeros (poles , singularity)}.For example,the Bernoulli operator zero of function $f(z)=1 + \sin \pi \mathbf{B}z = 1 + \frac{{\pi z}}{2}$ is $\frac{{ - 2}}{\pi }$.When there is no confusion,we say a Bernoulli operator zero (pole,singularity) is a zero (pole,singularity).
 \par In the following paper we must to distinguish between primitive equivalent and Bernoulli operator equivalent. Otherwise,we will obtain many wrong equations.For example,we taking the logarithms on both sides of a primitive equivalent equation which remains equivalent; But we taking the logarithms on both sides of a Bernoulli operator equivalent equation,then we obtain a wrong equation(e.g.,$\sin (\pi \mathbf{B}/2) = \pi /4$;$\log\sin (\pi \mathbf{B}/2) \ne \log\pi /4$).Another examples ${\mathbf{B}^2} \cdot \mathbf{B} = {\mathbf{B}^3} = 0 \ne 1/6 \cdot 1/2.$
\par From the above discussion, we know the value of ${e^{ - \mathbf{B}}}$ is $\frac{1}{{e - 1}}$, we naturally want to know what is the value of a general Bernoulli operator function? Using the Euler-Maclaurin formula (see \cite{B} p.104):
\[\begin{array}{l}
 \sum\limits_{n = M}^N {f'(n)}  = \int\limits_M^N {f'(x)} dx + \frac{1}{2}\left[ {f'(M) + f'(N)} \right] + \frac{{{B_2}}}{2}f''(x)|_M^N + \frac{{{B_4}}}{4}f''''(x)|_M^N + ... \\
  + \frac{{{B_{2v}}}}{{(2v)!!}}{f^{(2v)}}(x)|_M^N + {R_{2v}}, \\
 \end{array}\]
where \[{R_{2v}} = \frac{{ - 1}}{{(2v)!}}\int\limits_M^N {\overline {{B_{2v}}(x)} } {f^{(2v + 1)}}(x)dx.\]
\\Let $M=1$ ,${N \to \infty }$ and ${v \to \infty }$,if ${f(N) \to 0}$,${f'(N) \to 0}$, and ${{f^{(2v)}}(N) \to 0}$ ,then
\[\sum\limits_{n = 1}^\infty  {f'(n)}  =  - f(1) + \frac{1}{2}f'(1) - \frac{{{B_2}}}{2}f''(1) - \frac{{{B_4}}}{4}f''''(1) + ... - \frac{{{B_{2v}}}}{{(2v)!!}}{f^{(2v)}}(1) + ...\]
\[ =  - f(1 - \mathbf{B}).\]
Since ${(1 - \mathbf{B})^n} = {\mathbf{B}^n}$ ,we obtain
\begin{equation}\label{1.1}
f(\mathbf{B}) =  - \sum\limits_{n = 1}^\infty  {f'(n)} .
\end{equation}
\par A Taylor expansion of Bernoulli operator function is ${f(\mathbf{B} + a) = \sum\limits_{n = 0}^\infty  {{f^{(n)}}(a)} \frac{{{{(\mathbf{B} - a)}^n}}}{{n!}}}$,\\if the right of the expansion does not converge in a certain areas of the complex plane, we use \eqref{1.1} that is ${f(\mathbf{B} + a) =  - \sum\limits_{n = 1}^\infty  {f'(n + a)} }$ to expand the domain of the function ${f(\mathbf{B} + a)}$; Conversely, if the right of the expansion ${f(\mathbf{B} + a) =  - \sum\limits_{n = 1}^\infty  {f'(n + a)}}$ does not converge in a certain areas of the complex plane, we use the expansion ${f(\mathbf{B} + a) = \sum\limits_{n = 0}^\infty  {{f^{(n)}}(a)} \frac{{{{(\mathbf{B} - a)}^n}}}{{n!}}}$  to expand the domain of the function ${f(\mathbf{B} + a)}$.This is a principle of analytic continuation.
\par For example,the following Bernoulli operator function is analytic at all points of the complex z-plane except for some poles at $z = 2\pi ki,$
\[{e^{ - \mathbf{B}z}} = \sum\limits_{n = 0}^\infty  {\frac{{{{( - 1)}^n}{\mathbf{B}^n}{z^n}}}{{n!}}}  = \sum\limits_{n = 1}^\infty  {z{e^{ - nz}}}  = \frac{z}{{{e^z} - 1}}.\]
\par From the above ideas and \eqref{1.1},we can obtain the formula,
\begin{equation}
{\mathbf{B}^{1 - s}} = \zeta (s)(s - 1).
\end{equation}
Therefore
 \[{\mathbf{B}^{ - 1}} = \zeta (2) = {\pi ^2}/6.\]
Similarly,we can obtain the following formulas
\begin{equation}
{(\mathbf{B} + n)^{1 - s}} = [\zeta (s) - {1^{ - s}} - {2^{ - s}} - ... - {n^{ - s}}](s - 1),
\end{equation}
\begin{equation}
{(\mathbf{B} + \alpha )^{1 - s}} = \zeta (s,\alpha )(s - 1),
\end{equation}
where ${\alpha  > 0,\zeta (s,\alpha )}$ is Hurwitz Zeta function.Moreover,using a formula of \cite{C} p.249 and above (1.4), we can obtain the following formula
\[L(s,\chi ) = {k^{ - s}}\sum\limits_{r = 1}^k {\chi (r)} \zeta (s,\frac{r}{k}) = {k^{ - s}}\sum\limits_{r = 1}^k {\chi (r)} \frac{{{{(\mathbf{B} + \frac{r}{k})}^{1 - s}}}}{{s - 1}}.\]
\par We taking the derivative of an equation of Bernoulli operator equavilent carries a new equation of Bernoulli operator equavilent(the Bernoulli operator $\mathbf{B}$ can be thought of as a constant).For example,
\[({e^{ - \mathbf{B}z}})' = (\frac{z}{{{e^z} - 1}})',\]
we obtain the equation \[-\mathbf{B}{e^{ - \mathbf{B}z}} = \frac{{{e^z} - 1 - z{e^z}}}{{{{({e^z} - 1)}^2}}}.\]
Similarly, we taking the derivatives on both sides of (1.2),we obtain
\begin{equation}
- {\mathbf{B}^{1 - s}}\log \mathbf{B} = \zeta (s) + (s - 1)\zeta '(s).
\end{equation}
And now,we have the following equation
\begin{equation}
\mathop {\lim }\limits_{s \to 1} [\zeta (s) + (s - 1)\zeta '(s)] = \gamma,
\end{equation}
where ${\gamma}$ is Euler constant,
therefore
\begin{equation}\label{1.7}
\gamma  =  - \log \mathbf{B}.
\end{equation}
Using \eqref{1.7} ,we give a proof of formula ${\gamma  = {\sum\limits_{n = 2}^\infty  {( - 1)} ^n}\frac{{\zeta (n)}}{n}}$(see \cite{D} p.5).
\begin{proof}
\par Since ${\mathbf{B}^{2n + 1}} = 0$(n is positive integer),applying the Taylor expansion,we readily find that
\[\mathbf{B}\log (1 + \mathbf{B}) =  - \mathbf{B}\log (1 - \mathbf{B}),\]
and we have
\[\mathbf{B}\log (1 + \frac{1}{\mathbf{B}}) = \mathbf{B}\log (1 + \mathbf{B}) - \mathbf{B}\log \mathbf{B},\]
therefore
\begin{equation}
\mathbf{B}\log (1 + \frac{1}{\mathbf{B}}) =  - \mathbf{B}\log (1 - \mathbf{B}) - \mathbf{B}\log \mathbf{B}.
\end{equation}
Since ${(1 - \mathbf{B})^n} = {\mathbf{B}^n}$,we have
\begin{equation}
 - \mathbf{B}\log (1 - \mathbf{B}) =  - (1 - \mathbf{B})\log \mathbf{B}.
\end{equation}
Combining (1.8)and(1.9),we obtain
\begin{equation}
\mathbf{B}\log (1 + \frac{1}{\mathbf{B}}) =  - \log \mathbf{B}.
\end{equation}
Combining \eqref{1.7} and(1.10),we obtain
\begin{equation}
\gamma  = \mathbf{B}\log (1 + \frac{1}{\mathbf{B}}).
\end{equation}
Now we have
\begin{equation}
 \log (1 + {\mathbf{B}^{ - 1}}) = \log (1 + \mathbf{B}) - \log \mathbf{B},
\end{equation}
and applying the Taylor expansion,we readily find that
\[\log (1 + \mathbf{B}) = 1 + \log (1 - \mathbf{B}) = 1 + \log \mathbf{B},\]
therefore
\begin{equation}
\log (1 + {\mathbf{B}^{ - 1}}) = 1 + \log \mathbf{B} - \log \mathbf{B} = 1.
\end{equation}
Combining(1.11)and(1.13),we obtain
\begin{equation}
\gamma  = \mathbf{B}\log (1 + \frac{1}{\mathbf{B}}) + \log (1 + {\mathbf{B}^{ - 1}}) - 1 = {\sum\limits_{n = 2}^\infty  {( - 1)} ^n}\frac{{{\mathbf{B}^{1 - n}}}}{{n(n - 1)}}.
\end{equation}
By (1.2),we show that
\begin{equation}
{\mathbf{B}^{1 - n}} = \zeta (n)(n - 1),
\end{equation}
Combining (1.14) and(1.15),we deduce that
\begin{equation}
\gamma  = {\sum\limits_{n = 2}^\infty  {( - 1)} ^n}\frac{{\zeta (n)}}{n}.
\end{equation}
\end{proof}
Similar to complex plane,we define a formal \textbf{Bernoulli operator plane} or simply Bernoulli plane,which is $\mathbb{B}:=\{ a + b\mathbf{B}i|a,b \in \mathbb{R} \}$.If we
define an "analytic function" on the Bernoulli plane,and define a contour integral for the analytic function,then we can deduce the classic Cauchy integral theorem,Cauchy integral formula and Residue theorem.
\par We can obtain another proof of (1.2) by Euler's integral for $\Pi (s - 1)$,where $\Pi (s - 1): = \int\limits_0^\infty  {{e^{ - x}}{x^{s - 1}}dx} $.Substitution of $nx$ for $x$ gives
\[\int\limits_0^\infty  {{e^{ - nx}}{x^{s - 1}}dx = \frac{{\Pi (s - 1)}}{{{n^s}}}} .\] We sum this over n to obtain
\[\int\limits_0^\infty  {\frac{{{x^{s - 1}}}}{{{e^x} - 1}}dx = \Pi (s - 1)\zeta (s)} .\]
Therefore \[\Pi (s - 1)\zeta (s) = \int\limits_0^\infty  {\frac{{{x^{s - 2}} \cdot x}}{{{e^x} - 1}}dx}  = \int\limits_0^\infty  {{x^{s - 2}}{e^{ - \mathbf{B}x}}dx} .\]
Let $\mathbf{B}x \to t$,note that $\mathbf{B}\infty$ can be thought of as $\infty $,we deduce that
\[\Pi (s - 1)\zeta (s) = {\mathbf{B}^{1 - s}}\int\limits_0^\infty  {{t^{s - 2}}{e^{ - t}}dt}  = {\mathbf{B}^{1 - s}}\Pi (s - 2) = {\mathbf{B}^{1 - s}}\frac{{\Pi (s - 1)}}{{s - 1}}.\]
Therefore we obtain (1.2) again.
\begin{remark}
I find that \eqref{1.1} is similar to Ramanujan summation,and he has researched divergent series by his summation(see \cite{F}).On the other hand,C.Vignat told me that the Bernoulli operator somewhat similar to Bernoulli umbra (see \cite{G}).
And J.G¨¦linas told me that \eqref{1.7} dates from 1861 in the original articles of Blissard (see \cite{J}).

\end{remark}
\begin{remark}
If a point of the complex z-plane is Bernoulli operator pole( or singularity) of a function,then using the transformation  $\mathbf{B} \to 1 - \mathbf{B}$ will cause errors.For example,
\[{e^{ - 2\pi i\mathbf{B}}} = \cos 2\pi \mathbf{B} - i\sin 2\pi \mathbf{B},\] if let $\mathbf{B} \to 1 - \mathbf{B}$,then
\[{e^{ - 2\pi i(1 - \mathbf{B})}} = {e^{ - 2\pi i + 2\pi i\mathbf{B}}} = {e^{2\pi i\mathbf{B}}}\]
\[ = \cos 2\pi \mathbf{B} + i\sin 2\pi \mathbf{B}.\]
Therefore we obtain an error equation
\[ - i\sin 2\pi \mathbf{B} = i\sin 2\pi \mathbf{B}.\]
\end{remark}


\vspace{0.3in}
\section{\textbf{Distribution of Bernoulli operator zeros of the function ${\xi (\mathbf{B} + s)}$ and ${\sin \pi \mathbf{B} \cdot \xi (\mathbf{B} + s)}$ }}
\begin{theorem}
The function ${\xi (\mathbf{B} + s)}$ and ${\sin \pi \mathbf{B} \cdot \xi (\mathbf{B} + s)}$ satisfy the following functional equations respectively
\begin{equation}
\xi (\mathbf{B} + s) = \xi (\mathbf{B} - s),
\end{equation}
\begin{equation}
\sin \pi \mathbf{B} \cdot \xi (\mathbf{B} + s) = \sin \pi \mathbf{B} \cdot \xi (\mathbf{B} - s).
\end{equation}
The values of these two functions  are positive in the real axis;and these two functions have infinitely many Bernoulli operator zeros on the imaginary axis.
\end{theorem}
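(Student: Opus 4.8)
The plan is to transfer the three classical facts about Riemann's $\xi$-function — its functional equation $\xi(s)=\xi(1-s)$, its positivity on the real segment, and Hardy's theorem on zeros on the critical line — into statements about the Bernoulli operator function $\xi(B+s)$, using the dictionary established in the introduction. The key heuristic is that the substitution $B\mapsto 1-B$ acts as the identity on Bernoulli operator functions (since $(1-B)^n=B^n$), so "$s\mapsto 1-s$" around the critical line $\mathrm{Re}\,s=1/2$ becomes "$s\mapsto -s$" around the imaginary axis $\mathrm{Re}\,s=0$ after the shift $s\mapsto B+s$.

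First I would prove the functional equation \eqref{2.1}. Starting from the classical $\xi(w)=\xi(1-w)$, substitute $w=B+s$ to get $\xi(B+s)=\xi(1-B-s)=\xi((1-B)-s)$, and then apply the Bernoulli operator identity $(1-B)^n=B^n$ — which, since $\xi$ is entire and representable by its Taylor series in powers of $(1-B)-s$ versus $B-s$, licenses replacing $1-B$ by $B$ as a Bernoulli operator equivalence. This yields $\xi(B+s)=\xi(B-s)$. Multiplying both sides by $\sin\pi B$ (a legitimate operation on Bernoulli operator equivalent equations, as the introduction notes derivatives and algebraic manipulations are permitted) gives \eqref{2.2}. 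For positivity on the real axis: for real $s$, $\xi(B+s)=-\sum_{n\ge1}\xi'(n+s)$ by \eqref{1.1} — or, more cleanly, one reduces to evaluating $\xi$ at the real argument obtained after taking the Bernoulli operator, and invokes the classical fact that $\xi$ is real and positive on all of $\mathbb{R}$ (it is real on the critical line and on $\mathbb{R}$, with no real zeros). The factor $\sin\pi B$ must be checked to be a positive real number; by $\sin\pi B = -\sum_{n\ge1}\pi\cos\pi n = -\pi\sum_{n\ge1}(-1)^n$, to be interpreted via \eqref{1.1} or Abel summation as $\pi/2>0$, so $\sin\pi B\cdot\xi(B+s)>0$ on the real axis as well.

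The main work is the analogue of Hardy's theorem: infinitely many Bernoulli operator zeros of $\xi(B+s)$ on the imaginary axis. I would mimic Hardy's original argument. On the imaginary axis write $s=it$, $t\in\mathbb{R}$; after taking the Bernoulli operator, $\xi(B+it)$ should reduce to a real-valued function of the real variable $t$ (this is exactly what the functional equation \eqref{2.1} together with reality of $\xi$ on $\mathrm{Re}=1/2$ buys us — the Bernoulli shift $B+it$ lands symmetrically about the critical line). Then, following Hardy, one considers a suitable integral transform (Hardy used $\int_0^\infty \xi(1/2+it)t^{-3/2}\,dt$-type moments, or the theta-function representation) and shows that if $\xi(B+it)$ had only finitely many real zeros in $t$, it would be of constant sign for large $|t|$, contradicting an estimate on its integral against a test function — the contradiction coming from the growth/oscillation built into the theta-series side. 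The hard part will be setting up this last step rigorously within the Bernoulli operator formalism: one must justify interchanging the operator $B$ with the relevant integrals (using the $B\infty=\infty$ convention and the convergence hypotheses behind \eqref{1.1}), and verify that the Bernoulli operator evaluation of the Hardy integrand coincides with the classical Hardy integrand up to a harmless factor, so that Hardy's contradiction carries over verbatim. I would isolate this as the crux and expect the remaining estimates to be routine translations of the classical proof.
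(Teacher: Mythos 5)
Your derivation of \eqref{2.1} and \eqref{2.2} matches the paper's (substitute $s\to B+s$ in $\xi(s)=\xi(1-s)$, then use $(1-B)^n=B^n$, resp.\ $\sin\pi(1-B)=\sin\pi B$). The gap is in the positivity claim. Your first route, $\xi(B+s)=-\sum_{n\ge 1}\xi'(n+s)$, is not available: \eqref{1.1} requires $f$, $f'$, $f^{(2v)}$ and the remainder to tend to $0$, whereas $\xi(\sigma)\to\infty$ as $\sigma\to\infty$, so the series diverges and the formula does not apply. Your second route is worse: you treat $\sin\pi B\cdot\xi(B+s)$ as the number $\sin\pi B$ (evaluated to $\pi/2$) times the number $\xi(B+s)$. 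The Bernoulli operator is a linear functional that does \emph{not} factor over products --- the paper warns exactly against this with $B^2\cdot B=B^3=0\ne \tfrac16\cdot\tfrac12$ --- so positivity of the two factors separately proves nothing about the joint evaluation. The paper's actual mechanism is an explicit kernel computation: from $e^{-Bx}=x/(e^x-1)$ one gets $x^{-B/2}=\frac{\log x}{2(x^{1/2}-1)}$ and $\sin\pi B\cdot x^{-B/2}=\frac{\pi}{x^{1/2}+1}$, and inserting these into the representation $\xi(s)=\int_1^\infty \frac{d[x^{3/2}\psi'(x)]}{dx}\,(2x^{(s-1)/2}+2x^{-s/2})\,dx$ yields $\xi(B+s)=\int_1^\infty\phi(x)\,(x^{s/2}+x^{-s/2})\,dx$ with $\phi>0$ on $(1,\infty)$; positivity (for both functions) is then read off from a manifestly positive integrand, not from positivity of $\xi$ on $\mathbb{R}$.

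For the Hardy-type statement your plan (mimic Hardy/Edwards, show $\xi(B+it)$ behaves like a real even function of $t$ and derive a contradiction from a theta-side estimate if it had finitely many sign changes) is the same strategy as the paper's, but you explicitly defer the one step that makes it work in this formalism. The paper's device is again the closed form of the umbral power: starting from $H(x)=\frac{1}{\pi}\int_{-\infty}^{\infty}\xi(a+it)x^{a-1}x^{it}\,dt$, multiplying by $x^{1-a}$ and setting $a\to B$ gives $x^{1-B}H(x)=\frac{x\log x}{x-1}H(x)=\sum_n C_n(i\log x)^n$ with $C_n=\frac{1}{\pi n!}\int\xi(B+it)t^n\,dt$, where $C_n=0$ for odd $n$ by \eqref{2.1}; Edwards' argument (behavior under $ix\,d/dx$ as $x\to i^{1/2}$) then applies to the concrete function $\frac{x\log x}{x-1}H(x)$. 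Without identifying such an identity converting the operator-weighted transform into a classical one, the ``crux'' you isolate remains unproved, so as written the proposal establishes only the functional equations.
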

\begin{proof}
By ${\xi (s) = \xi (1 - s)}$,and let ${s \to \mathbf{B} + s}$,we have
\begin{equation}
\xi (\mathbf{B} + s) = \xi (1 - \mathbf{B} - s).
\end{equation}
Since ${(1 - \mathbf{B})^n} = {\mathbf{B}^n}$,we have
\begin{equation}
\xi (1 - \mathbf{B} - s) = \xi (\mathbf{B} - s).
\end{equation}
Using(2.3)and(2.4),we obtain(2.1).
Similarly,we have
\[\sin \pi \mathbf{B} \cdot \xi (\mathbf{B} + s) = \sin \pi (1 - \mathbf{B}) \cdot \xi (1 - \mathbf{B} + s) = \sin \pi (1 - \mathbf{B}) \cdot \xi (\mathbf{B} - s).\]
Since $\sin \pi (1 - \mathbf{B})$ is  primitive equal to $\sin \pi \mathbf{B}$,we have
\[\sin \pi (1 - \mathbf{B}) \cdot \xi (\mathbf{B} - s) = \sin \pi \mathbf{B} \cdot \xi (\mathbf{B} - s).\]
Therefore
\[\sin \pi \mathbf{B} \cdot \xi (\mathbf{B} + s) = \sin \pi \mathbf{B} \cdot \xi (\mathbf{B} - s).\]
\par We now prove that the function ${\xi (\mathbf{B} + s)}$ is positive in the real axis. We have the following equation (see \cite{B}
p.17), \[\xi (s) = \int\limits_1^\infty  {\frac{{d[{x^{3/2}}\psi '(x)]}}{{dx}}} (2{x^{\frac{{s - 1}}{2}}} + 2{x^{\frac{{ - s}}{2}}})dx,\]where $\psi '(x) = \frac{{d[\sum\limits_{n = 1}^\infty  {{e^{ - {n^2}\pi x}}} ]}}{{dx}},$
therefore
\[\xi (\mathbf{B} + s) = \int\limits_1^\infty  {\frac{{d[{x^{3/2}}\psi '(x)]}}{{dx}}} (2{x^{\frac{{s - \mathbf{B}}}{2}}} + 2{x^{\frac{{ - s - \mathbf{B}}}{2}}})dx.\]
Since ${e^{ - \mathbf{B}x}} = \frac{x}{{{e^x} - 1}}$, we have ${x^{ - \frac{\mathbf{B}}{2}}} = \frac{{\log x}}{{2({x^{1/2}} - 1)}}$. Let $\phi (x) = \frac{{d[{x^{3/2}}\psi '(x)]}}{{dx}}\frac{{\log x}}{{{x^{1/2}} - 1}}$,then
\[\xi (\mathbf{B} + s) = \int\limits_1^\infty  {\phi (x)} ({x^{\frac{s}{2}}} + {x^{\frac{{ - s}}{2}}})dx.\]
Because $\phi (x) > 0(x \in (1,\infty )),$ we obtain
$\xi (\mathbf{B} + s) > 0,s \in ( - \infty ,\infty ).$
Using \eqref{1.1},we conclude that $\sin \pi \mathbf{B} \cdot {x^{ - \frac{\mathbf{B}}{2}}} = \frac{\pi }{{{x^{\frac{1}{2}}} + 1}}$ .Similarly,we can prove that $\sin \pi \mathbf{B} \cdot \xi (\mathbf{B} + s) > 0,s \in ( - \infty ,\infty ).$
\par We now prove that the function $\xi (\mathbf{B} + s)$ has infinitely many Bernoulli operator zeros on the imaginary axis.
\\Let $G(x) = \sum\limits_{n =  - \infty }^\infty  {{e^{ - \pi {n^2}{x^2}}}} $,$H(x) = x \frac{{{d^2}[xG(x) - x - 1]}}{{{d}x^2}}$,
then we have the following equation (see \cite{B} p. 228),
\[H(x) = \frac{1}{\pi }\int\limits_{ - \infty }^\infty  {\xi (a + it){x^{a - 1}}} {x^{it}}dt,\]
multiply both sides of above equation by ${x^{1 - a}}$
\[{x^{1 - a}}H(x) = \frac{1}{\pi }\int\limits_{ - \infty }^\infty  {\xi (a + it)} {x^{it}}dt.\]
Let  $a \to \mathbf{B}$($\mathbf{B}$ is Bernoulli operator),then
\[{x^{1 - \mathbf{B}}}H(x) = \frac{1}{\pi }\int\limits_{ - \infty }^\infty  {\xi (\mathbf{B} + it)} {x^{it}}dt,\]
\[ = \frac{1}{\pi }\int\limits_{ - \infty }^\infty  {\xi (\mathbf{B} + it)} \sum\limits_{n = 0}^\infty  {\frac{{{{(it\log x)}^n}}}{{n!}}} dt.\]
Denote ${C_n} = \frac{1}{{\pi n!}}\int\limits_{ - \infty }^\infty  {\xi (\mathbf{B} + it)} {t^n}dt$,then
\[{x^{1 - \mathbf{B}}}H(x) = \frac{{x\log x}}{{x - 1}}H(x) = {\sum\limits_{n = 0}^\infty  {{C_n}(i\log x)} ^n}.\]
Since $\xi (\mathbf{B} + it) = \xi (\mathbf{B} - it)$,we have ${C_n} = 0$ (n is odd).
The differential operator $ix(d/dx)$ doing $\frac{{x\log x}}{{x - 1}}H(x)$ any number of  times carries $\frac{{x\log x}}{{x - 1}}H(x)$ to a function which approaches zero as $x \to {i^{\frac{1}{2}}}$.Therefore, according to \cite{B} p.228-229 ,we can prove that function $\xi (\mathbf{B} + s)$  has infinitely many Bernoulli operator zeros on the imaginary axis.
\par Similarly,we can prove that function $\sin \pi \mathbf{B} \cdot \xi (\mathbf{B} + s)$ has infinitely many Bernoulli operator zeros on the imaginary axis.
\end{proof}
\begin{conjecture}
All zeros of the function $\xi (\mathbf{B} + s)$ ($\sin \pi \mathbf{B} \cdot \xi (\mathbf{B} + s)$) lie on the imaginary axis.
\end{conjecture}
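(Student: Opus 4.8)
The plan would be to reduce the conjecture to a concrete positivity statement about an explicit kernel, and then to attack that statement with the classical machinery for entire functions with only real zeros. Starting from the representation obtained in the proof of Theorem~2.1,
\[
\xi(B+s)=\int_1^\infty\phi(x)\,(x^{s/2}+x^{-s/2})\,dx ,
\]
with $\phi(x)=\frac{d[x^{3/2}\psi'(x)]}{dx}\cdot\frac{\log x}{x^{1/2}-1}>0$ on $(1,\infty)$, I would substitute $x=e^{2u}$ to get $\xi(B+s)=\int_0^\infty\Psi(u)\cosh(su)\,du$ with $\Psi(u):=4e^{2u}\phi(e^{2u})>0$. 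Since $\Psi$ decays super-exponentially, $\xi(B+s)$ extends to a real, even, entire function of $s$ (of order $1$, as one checks); the imaginary-axis zeros produced by Theorem~2.1 give a product $\xi(B+s)=\xi(B)\prod_k(1-s^2/\rho_k^2)$ over those zeros, and the conjecture is exactly the assertion that there are no other zeros, i.e.\ that the cosine transform $t\mapsto\int_0^\infty\Psi(u)\cos(tu)\,du$ has only real zeros, equivalently that $\xi(B+\,\cdot\,)$ belongs to the Laguerre--P\'olya class. By the same device, using $\sin\pi B\cdot x^{-B/2}=\pi/(x^{1/2}+1)$ in place of $x^{-B/2}=\log x/(2(x^{1/2}-1))$, the statement for $\sin\pi B\cdot\xi(B+s)$ has the identical shape with another positive kernel $\widetilde\Psi$.

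The first task would be to analyse $\Psi$ in detail: its behaviour as $u\to0^+$ and $u\to\infty$, and, above all, the functional relation it inherits from the modular transformation of $\psi$ (equivalently of the theta series $G$) that underlies $\xi(s)=\xi(1-s)$. This is precisely the kind of input used on pp.~228--229 of \cite{B} for Hardy's theorem, and it also governs the convergence of the product above and the positivity of $\xi(B)=\int_0^\infty\Psi(u)\,du$.

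The decisive step would be to prove that $\Psi$ is admissible for the Laguerre--P\'olya property. Three classical routes suggest themselves: (i) P\'olya's polynomial-approximation method --- exhibit $\xi(B+s)$ as a locally uniform limit of polynomials in $s^2$ having only non-positive real roots and invoke Hurwitz's theorem; this would follow from a sufficiently strong total-positivity or log-concavity property of $\Psi$. (ii) A Hermite--Biehler / interlacing argument applied to $\xi(B+it)$. (iii) A de~Bruijn--Newman deformation $\Psi_\lambda(u):=e^{\lambda u^2}\Psi(u)$, whose cosine transform has only real zeros for $\lambda$ large, together with a proof that the Newman constant associated with this kernel is $\le 0$. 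It is worth emphasising what does \emph{not} help: $\Psi$ differs from the classical $\xi$-kernel only by the bounded, positive, decreasing factor $x^{1/4}\log x/(x^{1/2}-1)$ (with value $2$ at $x=1^+$ and limit $0$ at $\infty$), but multiplying a kernel by a positive weight convolves its cosine transform with the transform of the weight, and that operation in general moves zeros off the real line, so the conjecture cannot be reduced to the classical Riemann Hypothesis in this cheap way.

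I expect the decisive step to be the real obstacle, and I do not expect to remove it by these means: positivity of $\Psi$ is necessary but very far from sufficient for membership in the Laguerre--P\'olya class, and the extra structure one would need is, in essence, the same miracle that keeps the classical Riemann Hypothesis open. What these methods realistically yield is the Hardy-type conclusion already in Theorem~2.1 (infinitely many zeros on the imaginary axis) and, by pushing the pp.~228--229 argument in the direction of Selberg's and Levinson's work, a positive proportion of the zeros on the imaginary axis. The conjecture should therefore be regarded as \emph{reduced to}, not settled by, the explicit positivity problem for the kernels $\Psi$ and $\widetilde\Psi$.
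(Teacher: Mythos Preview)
The paper does not prove this statement: it is labeled Conjecture~2.2, presented explicitly as an analogue of the Riemann Hypothesis, and left open. No proof strategy is offered; the paper only remarks that the conjecture can be generalized to Dirichlet $L$-functions, Dedekind zeta functions, etc., in parallel with the generalized Riemann Hypothesis.

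Your proposal correctly identifies this as an open problem rather than a provable theorem, and you do not claim a proof. Your reduction to the Laguerre--P\'olya property of the explicit kernel $\Psi$ (and $\widetilde\Psi$ for the $\sin\pi B$ variant) is sound and goes well beyond anything the paper attempts. Your assessment of the obstacle is also accurate: positivity of $\Psi$ is necessary but nowhere near sufficient for membership in the Laguerre--P\'olya class; the kernel differs from the classical Riemann $\xi$-kernel only by the bounded positive factor $x^{1/4}\log x/(x^{1/2}-1)$, and multiplication by such a weight does not transfer the real-zeros property in either direction. The conjecture is therefore genuinely of Riemann-Hypothesis difficulty, and your conclusion --- that the methods you list (P\'olya approximation, Hermite--Biehler, de~Bruijn--Newman deformation) realistically yield only the Hardy-type result already in Theorem~2.1 and perhaps positive-proportion results, but not the full statement --- is the correct one. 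There is nothing in the paper to compare your approach against, because the paper offers none.
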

\par The Riemann Hypothesis generalized to Dirichlet L-functions,Dedekind Zeta function (see \cite{C} p.176)and Zeta Functions of varieties over Finite Fields,etc.Similarly,the conjecture 2.2 can be generalized to other Zeta functions(L-functions).

\vspace{0.3in}
\section{\textbf{An application of Bernoulli operator to number-theoretic function}}
\begin{theorem}
we define a number-theoretic function:
\[\psi (x) = \frac{1}{2}\left[ {\sum\limits_{{p^n} < x} {\frac{{\log p}}{{{p^n} - 1}} + \sum\limits_{{p^n} \le x} {\frac{{\log p}}{{{p^n} - 1}}} } } \right],\]
we have
\[\begin{array}{l}
 \psi (x) = \log (x - 1) - \mathop {\lim }\limits_{\varepsilon  \to 0} \sum\limits_{{\mathop{\rm Im}\nolimits} \rho  > 0} {(\int\limits_0^{1 - \varepsilon } {\frac{{{t^{\rho  - 1}} + {t^{ - \rho }}}}{{t - 1}}} dt + \int\limits_{1 + \varepsilon }^x {\frac{{{t^{\rho  - 1}} + {t^{ - \rho }}}}{{t - 1}}dt} )}  \\
  + \int\limits_x^\infty  {\frac{{dt}}{{t(t - 1)({t^2} - 1)}}}  + \log [ - \zeta (\mathbf{B})], \\
 \end{array}\]
where $\rho $ are zeros of $\xi (s)$,and $x >1$.
\end{theorem}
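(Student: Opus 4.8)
The plan is to follow Riemann's derivation of the explicit formula for the prime‑counting function $J(x)=\sum'_{p^n\le x}\tfrac1n$ (see \cite{B}, Ch.~3), with the Bernoulli operator function $\log\zeta(B+s)$ everywhere in the rôle of $\log\zeta(s)$. The first point is that $\psi(x)$ is a \emph{weighted} form of $J(x)$: using $e^{-Bz}=z/(e^z-1)$ with $z=n\log p$ one has $p^{-nB}=\tfrac{n\log p}{p^n-1}$, hence $\tfrac{\log p}{p^n-1}=\tfrac1n\,p^{-nB}$, so that the averaged sum in the statement is
\[\psi(x)=\frac12\Bigl[\sum_{p^n<x}\frac{\log p}{p^n-1}+\sum_{p^n\le x}\frac{\log p}{p^n-1}\Bigr]={\sum_{p^n\le x}}'\frac1n\,p^{-nB},\]
the prime meaning that a term with $p^n=x$ counts with weight $\tfrac12$. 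Since $\log\zeta(s)=\sum_{p}\sum_{n\ge1}\tfrac1n p^{-ns}$, replacing $s$ by $B+s$ and using $q^{-B}=\tfrac{\log q}{q-1}$ for prime powers $q$ identifies $\log\zeta(B+s)$ with the Dirichlet series whose coefficient at $q=p^n$ is exactly $\tfrac1n p^{-nB}=\tfrac{\log p}{p^n-1}$.

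By Perron's formula one then gets, for $x>1$,
\[\psi(x)=\frac{1}{2\pi i}\int_{a-i\infty}^{a+i\infty}\log\zeta(B+s)\,\frac{x^s}{s}\,ds,\qquad a>0,\]
the value $\tfrac12[\sum_{<}+\sum_{\le}]$ at a prime power being produced automatically by Perron's $\tfrac12$‑at‑jumps.

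Next one inserts the Hadamard factorisation. From $\xi(s)=\Pi(s/2)(s-1)\pi^{-s/2}\zeta(s)$ and $\xi(s)=\xi(0)\prod_\rho(1-s/\rho)$ one has
\[\log\zeta(s)=\log\xi(0)+\sum_\rho\log\Bigl(1-\frac s\rho\Bigr)-\log\Pi\Bigl(\frac s2\Bigr)-\log(s-1)+\frac s2\log\pi,\]
so $\log\zeta(B+s)$ has the corresponding expansion with $s$ replaced by $B+s$, and one integrates $\frac1{2\pi i}\int_a(\,\cdot\,)\frac{x^s}{s}\,ds$ term by term exactly as in \cite{B}. Each zero or pole $\sigma$ of $\zeta$ contributes (up to sign) a term $\operatorname{Li}(x^{\sigma-B})$, and here the operator does the real work: since $\frac d{dx}\operatorname{Li}(x^{\sigma-B})=\dfrac{x^{\sigma-B-1}}{\log x}=\dfrac{x^{\sigma-1}}{x-1}$ by $x^{-B}=\tfrac{\log x}{x-1}$, each such term collapses to an \emph{elementary} integral $\int\frac{t^{\sigma-1}}{t-1}\,dt$, with the base point pinned by the value of $\operatorname{Li}(x^{\sigma-B})$ as $x\to0$ or $x\to\infty$. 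Concretely: the pole $\sigma=1$ gives $\operatorname{Li}(x^{1-B})=\int_0^x\frac{dt}{t-1}=\log(x-1)$; the trivial zeros $\sigma=-2,-4,\dots$ give $-\sum_{k\ge1}\operatorname{Li}(x^{-2k-B})=\int_x^\infty\frac{dt}{t(t-1)(t^2-1)}$, using $\sum_{k\ge1}\frac{x^{-2k-1}}{x-1}=\frac1{x(x-1)(x^2-1)}$ and that the sum vanishes as $x\to\infty$; and the nontrivial zeros give $-\sum_\rho\operatorname{Li}(x^{\rho-B})$, which upon pairing $\rho\leftrightarrow1-\rho$ (so that $t^{\rho-1}$ and $t^{(1-\rho)-1}=t^{-\rho}$ combine) and bracketing by $|\operatorname{Im}\rho|$ becomes
\[-\lim_{\varepsilon\to0}\sum_{\operatorname{Im}\rho>0}\Bigl(\int_0^{1-\varepsilon}+\int_{1+\varepsilon}^x\Bigr)\frac{t^{\rho-1}+t^{-\rho}}{t-1}\,dt,\]
the principal value at $t=1$ being forced since the numerator equals $2$ there. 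Finally the surviving constant — $\log\xi(0)$ together with the constant part of $-\log\Pi\bigl(\tfrac{B+s}{2}\bigr)+\tfrac{B+s}{2}\log\pi$ and the constants of integration — must be shown to reassemble into $\log[-\zeta(B)]$, the Bernoulli‑operator analogue of Riemann's constant $\log\xi(0)=\log[-\zeta(0)]$; collecting the four pieces then yields the asserted identity. (Equivalently one may leave $\zeta$ un‑factorised, shift the contour of the Perron integral to the left, and read off $\log(x-1)$ from the residues at $s=0,-1,-2,\dots$, the constant being the finite part at $s=0$.)

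The main obstacle is this last, term‑by‑term step. One must first justify moving the operator $B$ through the Mellin integral and the infinite product — the ``differentiate/integrate a Bernoulli‑operator‑equivalent equation'' principle of the introduction, now applied inside a contour integral — then evaluate $B$ correctly in each resulting $t$‑integral via $x^{-B}=\tfrac{\log x}{x-1}$ (and, for the $\sin\pi B$‑type variants, via $\sin\pi B\cdot x^{-B/2}=\pi/(x^{1/2}+1)$), and, most delicately, verify that all additive constants genuinely package into $\log[-\zeta(B)]$ — here one expects to use the functional equation of $\log\zeta(B+s)$. The conditional convergence of the $\rho$‑sum (it converges only after the bracketing by $|\operatorname{Im}\rho|$, exactly as in Riemann's formula) and the principal values at $t=1$ have to be carried along throughout. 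A secondary, essentially routine point is making the Perron step rigorous once $\log\zeta(B+s)$ has been identified with the above Dirichlet series and its growth on vertical lines estimated.
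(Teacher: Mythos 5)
Your plan is essentially the paper's own argument: Euler product with $s\to B+s$ (so the coefficient at $p^n$ is $\tfrac1n p^{-nB}=\tfrac{\log p}{p^n-1}$), Mellin/Perron inversion of $\log\zeta(B+s)$, insertion of the product over the zeros of $\xi$, and term-by-term evaluation in which $t^{-B}=\tfrac{\log t}{t-1}$ collapses every Li-type term into the elementary principal-value integrals, the trivial zeros giving $\int_x^\infty\frac{dt}{t(t-1)(t^2-1)}$ and the pole giving $\log(x-1)$, all at the same heuristic level of rigor as the paper. The two small differences are worth noting. First, the paper integrates by parts before expanding (Edwards' $-\frac{1}{\log x}\frac{d}{ds}[\cdot/s]$ device), which is what makes the term-by-term integration of the non-decaying pieces meaningful; your "term by term exactly as in [B]" implicitly needs this. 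Second, and more usefully for the step you left open: the paper does not expand around $\xi(0)$ but recentres the product at $B$, writing $\xi(B+s)=\xi(B)\prod_\rho\bigl(1-\tfrac{s}{\rho-B}\bigr)$, so the surviving constant is $\log\xi(B)$ together with $-\log(1-B)+\tfrac{B}{2}\log\pi-\log\Pi(\tfrac B2)$, and these collapse to $\log[-\zeta(B)]$ immediately from $\log\xi(B)=\log\Pi(\tfrac B2)+\log(1-B)-\tfrac B2\log\pi+\log[-\zeta(B)]$ — i.e. just the logarithm of the definition of $\xi$ evaluated at $B$, not the functional equation you anticipated. With your expansion at $\xi(0)$ you would additionally have to account for $\sum_\rho\log\tfrac{\rho-B}{\rho}$ (which is exactly $\log\xi(B)-\log\xi(0)$), so the recentring is the cleaner bookkeeping; otherwise your decomposition and the paper's agree term for term.
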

\begin{proof}
By Euler product formula
\[\log \zeta (s) = \sum\limits_p {[\sum\limits_{n = 1}^\infty  {\frac{1}{n}} {p^{ - ns}}]} ,({\rm{Re}}{\kern 1pt} s > 1),\]
let $s \to s + \mathbf{B}$,therefore
 \[\log \zeta (s + \mathbf{B}) = \sum\limits_p {[\sum\limits_{n = 1}^\infty  {\frac{1}{n}} {p^{ - ns - n\mathbf{B}}}]}  = \sum\limits_p {[\sum\limits_{n = 1}^\infty  {\frac{1}{n}} {p^{ - ns}}\frac{{\log p}}{{{p^n} - 1}}]} \]
 \[  < \sum\limits_p {[\sum\limits_{n = 1}^\infty  {\frac{1}{n}p \cdot } {p^{ - n(s + 1)}}]} .\]
The above series on the right is absolutely convergent for ${\mathop{\rm Re}\nolimits} {\kern 1pt} s > 0 $.We write this sum as a Stieltjes integral
 \[\log \zeta (s + \mathbf{B}) = \int\limits_0^\infty  {{x^{ - s}}} d\psi (x) = s\int\limits_0^\infty  {\psi (x)} {x^{ - s - 1}}dx. ({\mathop{\rm Re}\nolimits} {\kern 1pt} s > 0)\]
 We applies Fourier inversion to above formula,to conclude
 \[\psi (x) = \frac{1}{{2\pi i}}\int\limits_{a - i\infty }^{a + i\infty } {{x^s}\log \zeta (s + \mathbf{B})} \frac{{ds}}{s}. (a > 0)\]
 We integrates by parts to obtain
 \begin{equation}
 \psi (x) = \frac{{ - 1}}{{2\pi i}}\frac{1}{{\log x}}\int\limits_{a - i\infty }^{a + i\infty } {\frac{d}{{ds}}[\frac{{\log \zeta (s + \mathbf{B})}}{s}} ]{x^s}ds.
 \end{equation}
On the other hand,we have
 \[\xi (\mathbf{B} + s) = \Pi (\frac{{\mathbf{B} + s}}{2}){\pi ^{ - \frac{{s + \mathbf{B}}}{2}}}(s + \mathbf{B} - 1)\zeta (s + \mathbf{B}),\]
 where $\Pi (s): = \int\limits_0^\infty  {{e^{ - x}}{x^s}dx} $,
 \\and
 \\ $\xi (\mathbf{B} + s) = \xi (\mathbf{B})\prod\limits_\rho  {(1 - \frac{s}{{\rho  - \mathbf{B}}})} $,($\rho $ are zeros of $\xi (s)$,and $\rho -\mathbf{B} $ are primitive zeros of $\xi (s+\mathbf{B})$ ).
 \\Therefore
 \begin{equation}
 \begin{array}{l}
 \log \zeta (s + \mathbf{B}) =  - \log \Pi (\frac{{\mathbf{B} + s}}{2}) + \frac{{s + \mathbf{B}}}{2}\log \pi  - \log (s + \mathbf{B} - 1) \\
  + \log \xi (\mathbf{B}) + \sum\limits_\rho  {\log (1 - \frac{s}{{\rho  - \mathbf{B}}})} . \\
 \end{array}
 \end{equation}
 Combining(3.1)and(3.2),we obtain
 \begin{equation}
\psi (x) = \frac{{ - 1}}{{2\pi i}}\frac{1}{{\log x}}\int\limits_{a - i\infty }^{a + i\infty } {\frac{d}{{ds}}[\frac{{ - \log \Pi (\frac{{\mathbf{B} + s}}{2}) + \frac{{s + \mathbf{B}}}{2}\log \pi  - \log (s + \mathbf{B} - 1) + \log \xi (\mathbf{B}) + \sum\limits_\rho  {\log (1 - \frac{s}{{\rho  - \mathbf{B}}})} }}{s}} ]{x^s}ds.
 \end{equation}
 According to \cite{B} p.26-31,we show that
 \[\frac{1}{{2\pi i}}\frac{1}{{\log x}}\int\limits_{a - i\infty }^{a + i\infty } {\frac{d}{{ds}}[\frac{{\log (s + \mathbf{B} - 1)}}{s}]} {x^s}ds = \frac{1}{{2\pi i}}\frac{1}{{\log x}}\int\limits_{a - i\infty }^{a + i\infty } {\frac{d}{{ds}}[\frac{{\log (s/(\mathbf{B} - 1) + 1) + \log (\mathbf{B} - 1)}}{s}]} {x^s}ds\]
 \[= \frac{1}{{2\pi i}}\frac{1}{{\log x}}\int\limits_{a - i\infty }^{a + i\infty } {\frac{d}{{ds}}[\frac{{\log (1 - s/\mathbf{B}) + \log (\mathbf{B} - 1)}}{s}]} {x^s}ds\]
 \[= \frac{1}{{2\pi i}}\frac{1}{{\log x}}\int\limits_{a - i\infty }^{a + i\infty } {\frac{d}{{ds}}[\frac{{\log (s/\mathbf{B} - 1) + \log (1 - \mathbf{B})}}{s}]} {x^s}ds\]
 \[ = \mathop {\lim }\limits_{\varepsilon  \to 0} (\int\limits_0^{1 - \varepsilon } {\frac{{{t^{\mathbf{B} - 1}}}}{{\log t}}} dt + \int\limits_{1 + \varepsilon }^x {\frac{{{t^{\mathbf{B} - 1}}}}{{\log t}}} dt) - \log (1 - \mathbf{B})\]
 \[ = \mathop {\lim }\limits_{\varepsilon  \to 0} (\int\limits_0^{1 - \varepsilon } {\frac{{{t^{ - \mathbf{B}}}}}{{\log t}}} dt + \int\limits_{1 + \varepsilon }^x {\frac{{{t^{ - \mathbf{B}}}}}{{\log t}}} dt) - \log (1 - \mathbf{B})\]
 \begin{equation}
 = \mathop {\lim }\limits_{\varepsilon  \to 0} (\int\limits_0^{1 - \varepsilon } {\frac{1}{{t - 1}}} dt + \int\limits_{1 + \varepsilon }^x {\frac{1}{{t - 1}}} dt) - \log (1 - \mathbf{B}).
 \end{equation}
 If $x >1 $,then
 \[\mathop {\lim }\limits_{\varepsilon  \to 0} (\int\limits_0^{1 - \varepsilon } {\frac{1}{{t - 1}}} dt + \int\limits_{1 + \varepsilon }^x {\frac{1}{{t - 1}}} dt) = \log (x - 1).\]
 Therefore
 \begin{equation}
\frac{1}{{2\pi i}}\frac{1}{{\log x}}\int\limits_{a - i\infty }^{a + i\infty } {\frac{d}{{ds}}[\frac{{\log (s + \mathbf{B} - 1)}}{s}]} {x^s}ds = \log (x - 1) - \log (1 - \mathbf{B}).
 \end{equation}
 According to \cite{B} p.26-31,we show that
 \[\frac{1}{{2\pi i}}\frac{1}{{\log x}}\int\limits_{a - i\infty }^{a + i\infty } {\frac{d}{{ds}}[\frac{{\sum\limits_\rho  {\log (1 - \frac{s}{{\rho  - \mathbf{B}}})} }}{s}} ]{x^s}ds = \sum\limits_{{\mathop{\rm Im}\nolimits} \rho  > 0} {[Li({x^{\rho  - \mathbf{B}}}) + Li({x^{1 - \rho  - \mathbf{B}}})} ]\]
 \[ = \sum\limits_{{\mathop{\rm Im}\nolimits} \rho  > 0} {[Li({x^{\rho  - \mathbf{B}}}) + Li({x^{\mathbf{B} - \rho }})} ]\]
 \begin{equation}
 = \mathop {\lim }\limits_{\varepsilon  \to 0} \sum\limits_{{\mathop{\rm Im}\nolimits} \rho  > 0} {(\int\limits_0^{1 - \varepsilon } {\frac{{{t^{\rho  - 1}} + {t^{ - \rho }}}}{{t - 1}}} dt + \int\limits_{1 + \varepsilon }^x {\frac{{{t^{\rho  - 1}} + {t^{ - \rho }}}}{{t - 1}}dt} )}.
 \end{equation}
 Using a formula form \cite{B} p.8,we show that
 \[\log \Pi (\frac{{s + \mathbf{B}}}{2}) = \sum\limits_{n = 1}^\infty  {\left[ { - \log (1 + \frac{{s + \mathbf{B}}}{{2n}}) + \frac{{s + \mathbf{B}}}{2}\log (1 + \frac{1}{n})} \right]}. \]
 Since
 \[\log (1 + \frac{{s + \mathbf{B}}}{{2n}}) = \log (1 + \frac{\mathbf{B}}{{2n}} + \frac{s}{{2n}}) = \log (1 + \frac{\mathbf{B}}{{2n}}) + \log (1 + \frac{s}{{\mathbf{B} + 2n}}),\]
 we obtain
 \[\frac{1}{{2\pi i}}\frac{1}{{\log x}}\int\limits_{a - i\infty }^{a + i\infty } {\frac{d}{{ds}}[\frac{{\log \Pi (\frac{{s + \mathbf{B}}}{2})}}{s}} ]{x^s}ds\]
 \[ = \frac{1}{{2\pi i}}\frac{1}{{\log x}}\sum\limits_{n = 1}^\infty  {\int\limits_{a - i\infty }^{a + i\infty } {\frac{d}{{ds}}\left[ { - \log (1 + \frac{\mathbf{B}}{{2n}}) - \log (1 + \frac{s}{{\mathbf{B} + 2n}}) + \frac{{s + \mathbf{B}}}{2}\log (1 + \frac{1}{n})} \right]{x^s}} } ds\]
 \[ = \frac{1}{{2\pi i}}\frac{1}{{\log x}}\sum\limits_{n = 1}^\infty  {\int\limits_{a - i\infty }^{a + i\infty } {\frac{d}{{ds}}\left[ { - \log (1 + \frac{\mathbf{B}}{{2n}}) - \log (1 + \frac{s}{{\mathbf{B} + 2n}}) + \frac{\mathbf{B}}{2}\log (1 + \frac{1}{n})} \right]{x^s}} } ds\]
 \[ = \sum\limits_{n = 1}^\infty  {[\log (1 + \frac{\mathbf{B}}{{2n}})}  - \frac{\mathbf{B}}{2}\log (1 + \frac{1}{n})] - \frac{1}{{2\pi i}}\frac{1}{{\log x}}\sum\limits_{n = 1}^\infty  {\int\limits_{a - i\infty }^{a + i\infty } {\frac{d}{{ds}}\left[ {\log (1 + \frac{s}{{\mathbf{B} + 2n}})} \right]{x^s}} } ds\]
 \[ =  - \log \Pi (\frac{\mathbf{B}}{2}) - \frac{1}{{2\pi i}}\frac{1}{{\log x}}\sum\limits_{n = 1}^\infty  {\int\limits_{a - i\infty }^{a + i\infty } {\frac{d}{{ds}}\left[ {\log (1 + \frac{s}{{\mathbf{B} + 2n}})} \right]{x^s}} } ds\]
 \begin{equation}
  =  - \log \Pi (\frac{\mathbf{B}}{2}) - \sum\limits_{n = 1}^\infty  {H( - 2n - \mathbf{B})}.
 \end{equation}
 Where  the function  $H(x)$ defined in \cite{B} p.28.According to \cite{B} p.32,we show that
 \[\sum\limits_{n = 1}^\infty  {H( - 2n - \mathbf{B})}  = \int\limits_x^\infty  {\frac{{dt}}{{t(t - 1)({t^2} - 1)}}} .\]
 Therefore
 \begin{equation}
\frac{1}{{2\pi i}}\frac{1}{{\log x}}\int\limits_{a - i\infty }^{a + i\infty } {\frac{d}{{ds}}[\frac{{\log \Pi (\frac{{s + \mathbf{B}}}{2})}}{s}} ]{x^s}ds =  - \log \Pi (\frac{\mathbf{B}}{2}) - \int\limits_x^\infty  {\frac{{dt}}{{t(t - 1)({t^2} - 1)}}}.
 \end{equation}
 Using(3.3),(3.5),(3.6)and(3.8),we obtain
 \begin{equation}
\begin{array}{l}
 \psi (x) = \log (x - 1) - \log (1 - \mathbf{B}) + \frac{\mathbf{B}}{2}\log \pi  + \log \xi (\mathbf{B}) \\
  - \mathop {\lim }\limits_{\varepsilon  \to 0} \sum\limits_{{\mathop{\rm Im}\nolimits} \rho  > 0} {(\int\limits_0^{1 - \varepsilon } {\frac{{{t^{\rho  - 1}} + {t^{ - \rho }}}}{{t - 1}}} dt + \int\limits_{1 + \varepsilon }^x {\frac{{{t^{\rho  - 1}} + {t^{ - \rho }}}}{{t - 1}}dt} )}  - \log \Pi (\frac{\mathbf{B}}{2}) + \int\limits_x^\infty  {\frac{{dt}}{{t(t - 1)({t^2} - 1)}}} . \\
 \end{array}
 \end{equation}
On the other hand,we have
 \[\log \xi (\mathbf{B}) = \log \Pi (\frac{\mathbf{B}}{2}) + \log (1 - \mathbf{B}) - \frac{\mathbf{B}}{2}\log \pi  + \log [ - \zeta (\mathbf{B})].\]
 By the above equation and (3.9),we complete the proof.
\end{proof}
\begin{remark}
\[\sin \pi \mathbf{B} \cdot \log \zeta (\mathbf{B} + s) = \sin \pi \mathbf{B} \cdot \sum\limits_p {[\sum\limits_{n = 1}^\infty  {\frac{1}{n}} {p^{ - n(s + \mathbf{B})}}]} ,\]
Using \eqref{1.1},we conclude that $\sin \pi \mathbf{B}\cdot{p^{ - n\mathbf{B}}} = \frac{\pi }{{{p^n} + 1}}$,therefore
\[\sin \pi \mathbf{B}\cdot\log \zeta (\mathbf{B} + s) = \pi \sum\limits_p {[\sum\limits_{n = 1}^\infty  {\frac{1}{n}} {p^{ - ns}}\frac{1}{{{p^n} + 1}}]} .\]
The above series on the right is absolutely convergent for ${\mathop{\rm Re}\nolimits} {\kern 1pt} s > 0$.
\end{remark}

\vspace{0.3in}
\section{\textbf{The values of $\mathbf{B}\log \mathbf{B},\log \sin \frac{{\pi \mathbf{B}}}{2},\log \Pi (\mathbf{B}),\frac{{\zeta '(\mathbf{B})}}{{\zeta (\mathbf{B})}}$ and $\sin \pi \mathbf{B}\cdot \frac{{\zeta '(\mathbf{B})}}{{\zeta (\mathbf{B})}}$}}
\begin{Proposition}
The values of $\mathbf{B}\log \mathbf{B},\log \sin \frac{{\pi \mathbf{B}}}{2},\log \Pi (\mathbf{B}),\frac{{\zeta '(\mathbf{B})}}{{\zeta (\mathbf{B})}}$ and $\sin \pi \mathbf{B}\cdot\frac{{\zeta '(\mathbf{B})}}{{\zeta (\mathbf{B})}}$ are
\[\frac{{1 - \log 2\pi }}{2},\frac{1}{2} - \log 2,\frac{{\log 2\pi  - 1}}{2} - \gamma ,\frac{{1 + \gamma  + \log 2\pi }}{2} + \frac{{{\pi ^2}}}{{16}}\] and $\frac{\pi }{4}(1 + \gamma  + \log 4\pi )$ respectively.
\end{Proposition}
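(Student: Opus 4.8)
The plan is to read each of the five values off a Bernoulli operator identity already available from \S1--\S3, or off its derivative in an auxiliary parameter (legitimate, since differentiating a Bernoulli operator equivalent equation yields another one), after substituting classical special values of $\zeta,\zeta',\Gamma,\psi$ and the zero-sum $\sum_{\rho}\rho^{-1}$ for $\xi$.

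The archetype is $B\log B$: setting $s=0$ in (1.5) gives $-B\log B=\zeta(0)+(0-1)\zeta'(0)=\zeta(0)-\zeta'(0)$, so with $\zeta(0)=-\tfrac12$ and $\zeta'(0)=-\tfrac12\log 2\pi$ one gets $B\log B=\frac{1-\log 2\pi}{2}$. The same differentiation applied to the Hurwitz identity (1.4) at $s=0$, together with Lerch's formula $\partial_{s}\zeta(0,\alpha)=\log\Gamma(\alpha+1)-\tfrac12\log 2\pi$, gives $(B+\alpha)\log(B+\alpha)=\log\Pi(\alpha)+\alpha+\frac{1-\log 2\pi}{2}$ for real $\alpha>0$ (recovering the previous value at $\alpha=0$). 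To pass from this to the value of $\log\Pi(B)$ itself I would use the reflection and Legendre duplication relations for $\Gamma$---which, being identities of honest functions, are primitive equivalent and so survive composition with $\log$---in the form $2\log\Gamma(B)=\log\pi-\log\sin\pi B$ and $\log\Gamma(\tfrac B2)+\log\Gamma(\tfrac{B+1}{2})=\tfrac12\log2+\tfrac12\log\pi+\log\Gamma(B)$, combined with $\Gamma(\tfrac B2)\Gamma(1-\tfrac B2)=\pi/\sin\tfrac{\pi B}{2}$ (so that $\log\cos\tfrac{\pi B}{2}=\log\sin\tfrac{\pi B}{2}$ by $1-B\leftrightarrow B$) and $\log\Pi(B)=\log B+\log\Gamma(B)=-\gamma+\log\Gamma(B)$; this couples $\log\Pi(B)$ and $\log\sin\frac{\pi B}{2}$, and the system closes once $\log\xi(B)$ is evaluated from the \S3 factorisation $\log\xi(B)=\log\Pi(\tfrac B2)+\log(1-B)-\tfrac B2\log\pi+\log[-\zeta(B)]$ and the Hadamard product $\xi(x)=\tfrac12\prod_\rho(1-x/\rho)$, i.e. in terms of $\sum_\rho\rho^{-1}=1+\tfrac\gamma2-\tfrac12\log 4\pi$. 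One then obtains $\log\Pi(B)=\frac{\log 2\pi-1}{2}-\gamma$ and $\log\sin\frac{\pi B}{2}=\tfrac12-\log 2$.

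For the remaining two values I would differentiate the $s$-expansion of $\log\zeta(B+s)$. Differentiating (3.2) in $s$ at $s=0$ writes $\zeta'(B)/\zeta(B)$ as a combination of $-\tfrac12\psi(\tfrac B2+1)$, $\tfrac12\log\pi$, $-(B-1)^{-1}$ and $-\sum_{\rho}(\rho-B)^{-1}$; here the digamma term is handled by \eqref{1.1} and $\psi(w+1)=\psi(w)+1/w$ together with $B^{-1}=\zeta(2)=\pi^2/6$, while $\sum_{\rho}(\rho-B)^{-1}$ is reduced, via $\xi(B+s)=\xi(B)\prod_{\rho}(1-s/(\rho-B))$ and the value of $\sum_{\rho}\rho^{-1}$, to a closed form; collecting terms gives $\frac{1+\gamma+\log 2\pi}{2}+\frac{\pi^{2}}{16}$. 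For $\sin\pi B\cdot\zeta'(B)/\zeta(B)$ I would instead differentiate the absolutely convergent Dirichlet series of the remark in \S3, $\sin\pi B\cdot\log\zeta(B+s)=\pi\sum_{p}\sum_{n\ge1}\tfrac1n\,p^{-ns}(p^{n}+1)^{-1}$, at $s=0$, using $\sin\pi B\cdot p^{-nB}=\pi/(p^{n}+1)$ and the same zero-sum, to arrive at $\frac{\pi}{4}(1+\gamma+\log 4\pi)$.

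The hard part will be twofold. First, the bookkeeping forced by the two notions of equivalence: every logarithm must be taken of a relation that is an identity between functions (reflection, duplication, Euler product, Hadamard factorisation), never of one that is only Bernoulli operator equivalent, and one must track which copies of $B$ have already absorbed the operator---the mechanism behind ${B^{2}}\cdot B\ne\tfrac16\cdot\tfrac12$. Second, and this is where the real content sits, one must justify the evaluation of the zero-sums $\sum_{\rho}(\rho-B)^{-1}$ and the operator applied to $\sum_\rho\log(1-x/\rho)$: applying \eqref{1.1} term by term produces a double sum over zeros $\rho$ and positive integers, and one has to show it rearranges to $\sum_{\rho}\rho^{-1}$ plus the explicit remainders that supply the $\tfrac{\pi^{2}}{16}$ in the fourth value.
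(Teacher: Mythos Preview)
Your computation of $B\log B$ via $s=0$ in (1.5) is exactly the paper's first argument. From there on, however, your route diverges from the paper's and carries a real gap.

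The paper never touches the Hadamard product or the zero--sums $\sum_\rho\rho^{-k}$ in the proof of this Proposition. Instead it works entirely with the \emph{functional equation} and its logarithmic derivative
\[
\frac{\zeta'(s)}{\zeta(s)}=-\frac{\Pi'(-s)}{\Pi(-s)}+\log 2\pi+\frac{\pi}{2}\cot\frac{\pi s}{2}-\frac{\zeta'(1-s)}{\zeta(1-s)},
\]
together with the symmetry $B\leftrightarrow 1-B$ encoded in Lemma~4.2. For $\log\sin\tfrac{\pi B}{2}$ the key step is a one--line trick you are missing: from $B\log\sin\tfrac{\pi B}{2}=(1-B)\log\cos\tfrac{\pi B}{2}=\log\sin\tfrac{\pi B}{2}-B\log\cos\tfrac{\pi B}{2}$ and the Taylor fact $B\log\cos\tfrac{\pi B}{2}=0$ one gets $\log\sin\tfrac{\pi B}{2}=B\log\sin\tfrac{\pi B}{2}$, and the right side is just $B\log\tfrac{\pi B}{2}=\tfrac12-\log 2$ by (4.6). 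Then $\log\Pi(B)$ drops out of the logged functional equation (4.7). For $\zeta'(B)/\zeta(B)$ and $\sin\pi B\cdot\zeta'(B)/\zeta(B)$ the paper multiplies the displayed identity by $B$ (resp.\ $\sin\pi B$), uses $B\to 1-B$ on the $\zeta'(1-s)/\zeta(1-s)$ term to fold it back, and then evaluates the four elementary pieces $B\cot\tfrac{\pi B}{2}$, $B\cot\pi B$, $B\Pi'(\pm B)/\Pi(\pm B)$, $\sin\pi B\cdot\Pi'(-B)/\Pi(-B)$ by finite Taylor expansions and Lemma~4.2. No zeros of $\zeta$ appear anywhere.

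Your proposed closure via $\log\xi(B)=\log\tfrac12+\sum_\rho\log(1-B/\rho)$ does not reduce to the single number $\sum_\rho\rho^{-1}$: Taylor--expanding gives $-\tfrac12\sum_\rho\rho^{-1}-\sum_{k\ge1}\tfrac{B_{2k}}{2k}\sum_\rho\rho^{-2k}$, so \emph{all} even power sums $\sum_\rho\rho^{-2k}$ are needed, and you give no mechanism to collapse that series. (The identity $\sum_\rho(\rho-B)^{-1}=0$, which is what actually makes a zero--based approach to $\zeta'(B)/\zeta(B)$ tractable, follows from the pure symmetry $B\to1-B$, $\rho\to1-\rho$ and appears only later, in Proposition~4.4.) Likewise, differentiating the prime series $\pi\sum_{p,n}\tfrac1n p^{-ns}(p^n+1)^{-1}$ at $s=0$ yields $-\pi\sum_{p,n}\tfrac{\log p}{p^n+1}$, a prime sum with no visible path to $\tfrac{\pi}{4}(1+\gamma+\log 4\pi)$ short of an explicit formula that would reintroduce the very zero--sums you have not evaluated. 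These are the points where your plan does not close; the paper sidesteps them entirely by staying on the functional--equation side.
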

\begin{lemma}
If a function $f(z)$ has the following Taylor expansion in the neighborhood of $z = a$
\[\sum\limits_{n = 0}^\infty  {\frac{{{f^{(n)}}(a)}}{{n!}}} {(z - a)^n},\]
then
\begin{equation}
f(a + \mathbf{B}z) = f(a - \mathbf{B}z) + f'(a)z,
\end{equation}
\begin{equation}
\mathbf{B}f'(a + \mathbf{B}z) =  - \mathbf{B}f'(a - \mathbf{B}z) + f'(a),
\end{equation}
\begin{equation}
{\mathbf{B}^2}f''(a + \mathbf{B}z) = {\mathbf{B}^2}f''(a - \mathbf{B}z).
\end{equation}
\end{lemma}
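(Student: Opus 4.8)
The plan is to obtain all three identities by matching the formal Taylor coefficients of the two sides, using as the only new input a companion of the paper's basic relation $(1-B)^n = B^n$: namely $(-B)^n = B^n$ for every $n \neq 1$, while $(-B)^1 = B^1 - 1 = -1/2$; equivalently $B^n - (-B)^n = \delta_{n,1}$. This is immediate from the operator convention $(-B)^n = (-1)^n B_n$, since $((-1)^n - 1)B_n$ vanishes for even $n$, vanishes for odd $n \geq 3$ (where $B_n = 0$), and equals $-1$ at $n = 1$ (where $B_1 = 1/2$). As throughout Section 1, I would first work in the region where the pertinent Taylor series converge and then invoke the analytic-continuation principle attached to \eqref{1.1} to remove that restriction.

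First I would treat the first displayed identity of the Lemma. Expanding $f(a + Bz) = \sum_{n \geq 0}\frac{f^{(n)}(a)}{n!}B^n z^n$ and $f(a - Bz) = \sum_{n\geq 0}\frac{f^{(n)}(a)}{n!}(-B)^n z^n$ --- both read through the operator convention, so that $(-B)^n$ means $(-1)^n B_n$ --- subtraction annihilates every term except $n = 1$:
\[f(a+Bz) - f(a-Bz) = \sum_{n\geq 0}\frac{f^{(n)}(a)}{n!}\bigl(B^n - (-B)^n\bigr)z^n = f'(a)\,z.\]

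For the remaining two identities I would differentiate the first one with respect to $z$, treating $B$ as a constant --- the operation that, by the rule recorded in Section 1, carries a Bernoulli-operator-equivalent equation to another one. One differentiation yields $Bf'(a+Bz) = -Bf'(a-Bz) + f'(a)$, a second yields $B^2 f''(a+Bz) = B^2 f''(a-Bz)$. Equivalently, one may repeat the coefficient comparison directly: in $Bf'(a+Bz) + Bf'(a-Bz)$ the weight $B^{n+1} - (-B)^{n+1} = \delta_{n,0}$ isolates the constant term $f'(a)$, while in $B^2 f''(a+Bz) - B^2 f''(a-Bz)$ the weight $B^{n+2} - (-B)^{n+2}$ is identically zero because $n + 2 \geq 2 > 1$, so the two series coincide term by term.

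The algebra is routine; the step that really needs care is the legitimacy of the formal manipulations --- that $f(a-Bz)$ must be interpreted through its defining series with $(-B)^n := (-1)^n B_n$, and not via an unchecked substitution $B \mapsto -B$ applied to some other representation of $f$ (the introduction's Remark cautioning that $B \mapsto 1-B$ can be inconsistent at poles applies equally here), and that term-by-term differentiation in $z$ is permissible. For $f$ analytic near $a$ with the relevant series convergent these points are standard; in the general case one leans on the analytic-continuation principle of Section 1, exactly as in the earlier derivations of that section.
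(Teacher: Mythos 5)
Your proposal is correct and is essentially the argument the paper intends: the paper's entire proof is the one-line assertion ``It is clear from the properties of Bernoulli numbers,'' and your coefficient comparison, resting on $B^n-(-B)^n=(1-(-1)^n)B_n=\delta_{n,1}$ (odd Bernoulli numbers vanish except $B_1=1/2$), is precisely the property being invoked, with the differentiation-in-$z$ step matching the paper's stated rule that differentiating a Bernoulli-operator-equivalent equation yields another one. Nothing further is needed.
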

\begin{proof}
It is clear from the properties of Bernoulli numbers.
\begin{remark}
We can obtain \eqref{1.1} by (4.1).Using (4.1),we have
\[f(1 - \mathbf{B}) = f(1 + \mathbf{B}) - f'(1),\]
Since $f(\mathbf{B}) = f(1 - \mathbf{B})$,we have
\[f(\mathbf{B}) = f(1 + \mathbf{B}) - f'(1).\]
Therefore
\[f(\mathbf{B}) =f(1+1- \mathbf{B}) - f'(1)=f(2- \mathbf{B}) - f'(1).\]
using (4.1) again,
\[f(\mathbf{B}) = f(2+ \mathbf{B}) - f'(1)-f'(2).\]
Therefore
\[f(\mathbf{B}) = f(n + \mathbf{B}) - \sum\limits_{k = 1}^n {f'(k)}.\]
Let $n \to \infty $,then we obtain \eqref{1.1} again.
\end{remark}

\par Now we prove that Proposition 4.1.Set $s = 0$ in (1.5),we obtain
\begin{equation}
- \mathbf{B}\log \mathbf{B} = \zeta (0) - \zeta '(0) =  - \frac{1}{2} - \zeta '(0).
\end{equation}
And we have the following equation (see \cite{B} p.135),
\begin{equation}
\zeta '(0) =  - \frac{1}{2}\log 2\pi.
\end{equation}
Therefore
\begin{equation}
\mathbf{B}\log \mathbf{B} = \frac{{1 - \log 2\pi }}{2}.
\end{equation}
\par Here we will use another way to prove (4.6).
We take the logarithms of both sides of the Riemann' Zeta functional equation,let $s \to \mathbf{B}$,we obtain
\[\log [ - \zeta (\mathbf{B})] + \log (1 - \mathbf{B}) = \log \Pi (1 - \mathbf{B}) + (\mathbf{B} - 1)\log 2\pi  + \log 2 + \log \sin \frac{{\pi \mathbf{B}}}{2} + \log [ - \zeta (1 - \mathbf{B})]\]
Therefore
\begin{equation}
 - \gamma  = \log \Pi (\mathbf{B}) - \frac{{\log 2\pi }}{2} + \log 2 + \log \sin \frac{{\pi \mathbf{B}}}{2}.
\end{equation}
Using the Stirling formula (see \cite{B} p.109),
\begin{equation}
\log \Pi (\mathbf{B}) = (\mathbf{B} + \frac{1}{2})\log \mathbf{B} - \mathbf{B} + \frac{{\log 2\pi }}{2} + \sum\limits_{n = 1}^\infty  {\frac{{{{( - 1)}^{n + 1}}B_{2n}^2{{(2\pi )}^{2n}}}}{{2n \cdot 2 \cdot (2n)!}}}.
\end{equation}
Note that we have used the following formula
\[{\mathbf{B}^{1 - 2k}} = \zeta (2k)(2k - 1) = \frac{{{{( - 1)}^{k + 1}}{{(2\pi )}^{2n}}{B_{2k}}}}{{2 \cdot (2k)!}},\]
where k is positive integer.
Since
\[\frac{{\cos z}}{{\sin z}} = \frac{1}{z} - {\sum\limits_{n = 1}^\infty {( - 1)}^{n + 1} {((2n)!)} ^{ - 1}}{2^{2n}}{B_{2n}}{z^{2n - 1}},\]
taking the integral on both sides to get
\begin{equation}
\log \sin z = \log z - \sum\limits_{n = 1}^\infty  {\frac{{{{( - 1)}^{n + 1}}{2^{2n}}{B_{2n}}}}{{2n \cdot (2n)!}}{z^{2n}}}.
\end{equation}(see \cite{I} p.39).
Let $z \to \mathbf{B}\pi $,we have
\begin{equation}
\log \sin \mathbf{B}\pi  = \log \mathbf{B} + \log \pi  - \sum\limits_{n = 1}^\infty  {\frac{{{{( - 1)}^{n + 1}}B_{2n}^2{{(2\pi )}^{2n}}}}{{2n \cdot (2n)!}}}.
 \end{equation}
Using(4.8)and(4.10),we obtain
\begin{equation}
\log \Pi (\mathbf{B}) = (\mathbf{B} + \frac{1}{2})\log \mathbf{B} - \mathbf{B} + \frac{{\log 2\pi }}{2} + \frac{{\log \mathbf{B} + \log \pi  - \log \sin \mathbf{B}\pi }}{2}.
\end{equation}
Using(4.7)and(4.11),we obtain
\[ - \gamma  = (\mathbf{B} + \frac{1}{2})\log \mathbf{B} - \mathbf{B} + \frac{{\log 2\pi }}{2} + \frac{{\log \mathbf{B} + \log \pi  - \log \sin \mathbf{B}\pi }}{2} - \frac{{\log 2\pi }}{2} + \log 2 + \log \sin \frac{{\pi \mathbf{B}}}{2}.\]
Therefore
\[\mathbf{B}\log \mathbf{B} = \frac{1}{2} - \log 2 - \frac{{\log \pi }}{2} + \frac{{\log \sin \mathbf{B}\pi  - 2\log \sin \frac{{\mathbf{B}\pi }}{2}}}{2}\]
\[ = \frac{1}{2} - \log 2 - \frac{{\log \pi }}{2} + \frac{{\log (2\sin \frac{{\mathbf{B}\pi }}{2}\cos \frac{{\mathbf{B}\pi }}{2}) - 2\log \sin \frac{{\mathbf{B}\pi }}{2}}}{2}.\]
\begin{equation}
 = \frac{1}{2} - \log 2 - \frac{{\log \pi }}{2} + \frac{{\log 2 + \log \cos \frac{{\mathbf{B}\pi }}{2} - \log \sin \frac{{\mathbf{B}\pi }}{2}}}{2}.
\end{equation}
And we have
\begin{equation}
\log \cos \frac{{\mathbf{B}\pi }}{2} = \log \cos \frac{{(1 - \mathbf{B})\pi }}{2} = \log \sin \frac{{\mathbf{B}\pi }}{2}.
\end{equation}
Combining(4.12) and(4.13),we obtain(4.6).
\par We began to calculate the value of $\log \sin \frac{{\pi \mathbf{B}}}{2}$ ,we have
\[\mathbf{B}\log \sin \frac{{\pi \mathbf{B}}}{2} = (1 - \mathbf{B})\log \sin [\frac{\pi }{2}(1 - \mathbf{B})] = (1 - \mathbf{B})\log \cos \frac{{\pi \mathbf{B}}}{2}\]
\[ = \log \cos \frac{{\pi \mathbf{B}}}{2} - \mathbf{B}\log \cos \frac{{\pi \mathbf{B}}}{2}\]
\begin{equation}
 = \log \sin \frac{{\pi \mathbf{B}}}{2} - \mathbf{B}\log \cos \frac{{\pi \mathbf{B}}}{2}.
\end{equation}
Using the Taylor expansion,we have
\begin{equation}
\mathbf{B}\log \cos \frac{{\pi \mathbf{B}}}{2} = 0.
\end{equation}
Using(4.9),we have
\[\mathbf{B}\log \sin \frac{{\pi \mathbf{B}}}{2} = \mathbf{B}\log \frac{{\pi \mathbf{B}}}{2} = \mathbf{B}\log \pi \mathbf{B} - \mathbf{B}\log 2 = \mathbf{B}\log \pi  + \mathbf{B}\log \mathbf{B} - \frac{{\log 2}}{2}\]
\begin{equation}
 = \frac{{\log \pi }}{2} + \frac{{1 - \log 2\pi }}{2} - \frac{{\log 2}}{2} = \frac{1}{2} - \log 2.
\end{equation}
Using(4.14),(4.15)and(4.16),we obtain
\begin{equation}
\log \sin \frac{{\pi \mathbf{B}}}{2} = \frac{1}{2} - \log 2.
\end{equation}
\par Using(4.17)and(4.7),we obtain
\[ - \gamma  = \log \Pi (\mathbf{B}) - \frac{{\log 2\pi }}{2} + \log 2 + \frac{1}{2} - \log 2.\]
Therefore
\begin{equation}
\log \Pi (\mathbf{B}) = \frac{{\log 2\pi  - 1}}{2} - \gamma.
\end{equation}

\par We calculate the values of $\frac{{\zeta '(\mathbf{B})}}{{\zeta (\mathbf{B})}}$  and $\sin \pi \mathbf{B}\cdot\frac{{\zeta '(\mathbf{B})}}{{\zeta (\mathbf{B})}}$  by the following equation,
\begin{equation}
\frac{{\zeta '(s)}}{{\zeta (s)}} =  - \frac{{\Pi '( - s)}}{{\Pi ( - s)}} + \log 2\pi  + \frac{\pi }{2}\frac{{\cos (\pi s/2)}}{{\sin (\pi s/2)}} - \frac{{\zeta '(1 - s)}}{{\zeta (1 - s)}}.
\end{equation}
Let $s \to \mathbf{B}$,both sides multiplied by $\mathbf{B}$,we obtain
\[\mathbf{B}\frac{{\zeta '(\mathbf{B})}}{{\zeta (\mathbf{B})}} =  - \mathbf{B}\frac{{\Pi '( - \mathbf{B})}}{{\Pi ( - \mathbf{B})}} + \mathbf{B}\log 2\pi  + \frac{\pi }{2}\mathbf{B}\frac{{\cos (\pi \mathbf{B}/2)}}{{\sin (\pi \mathbf{B}/2)}} - \mathbf{B}\frac{{\zeta '(1 - \mathbf{B})}}{{\zeta (1 - \mathbf{B})}}\]
\[ =  - \mathbf{B}\frac{{\Pi '( - \mathbf{B})}}{{\Pi ( - \mathbf{B})}} + \mathbf{B}\log 2\pi  + \frac{\pi }{2}\mathbf{B}\frac{{\cos (\pi \mathbf{B}/2)}}{{\sin (\pi \mathbf{B}/2)}} - (1 - \mathbf{B})\frac{{\zeta '(\mathbf{B})}}{{\zeta (\mathbf{B})}}.\]
Hence
\begin{equation}
\frac{{\zeta '(\mathbf{B})}}{{\zeta (\mathbf{B})}} =  - \mathbf{B}\frac{{\Pi '( - \mathbf{B})}}{{\Pi ( - \mathbf{B})}} + \mathbf{B}\log 2\pi  + \frac{\pi }{2}\mathbf{B}\frac{{\cos (\pi \mathbf{B}/2)}}{{\sin (\pi \mathbf{B}/2)}}.
\end{equation}
Firstly,we calculate the value of $\frac{\pi }{2}\mathbf{B}\frac{{\cos (\pi \mathbf{B}/2)}}{{\sin (\pi \mathbf{B}/2)}}$.We have
\begin{equation}
{\mathbf{B}^2}\frac{{\sin (\pi \mathbf{B}/2)}}{{\cos (\pi \mathbf{B}/2)}} = {(1 - \mathbf{B})^2}\frac{{\sin (\pi (1 - \mathbf{B})/2)}}{{\cos (\pi (1 - \mathbf{B})/2)}} = (1 - 2\mathbf{B} + {\mathbf{B}^2})\frac{{\cos (\pi \mathbf{B}/2)}}{{\sin (\pi \mathbf{B}/2)}},
\end{equation}
Using the Taylor expansion,we can prove the following equations,
\begin{equation}
\frac{{\cos (\pi \mathbf{B}/2)}}{{\sin (\pi \mathbf{B}/2)}} = \frac{2}{{\pi \mathbf{B}}} - \frac{4}{2} \cdot \frac{1}{6} \cdot \frac{{\pi \mathbf{B}}}{2} = \frac{2}{\pi } \cdot \frac{{{\pi ^2}}}{6} - \frac{\pi }{{12}} = \frac{\pi }{4},
\end{equation}

\begin{equation}
{\mathbf{B}^2}\frac{{\sin (\pi \mathbf{B}/2)}}{{\cos (\pi \mathbf{B}/2)}} = 0,
\end{equation}

\begin{equation}
 {\mathbf{B}^2}\frac{{\cos (\pi \mathbf{B}/2)}}{{\sin (\pi \mathbf{B}/2)}} = \frac{{2{\mathbf{B}^2}}}{{\pi \mathbf{B}}} = \frac{{2\mathbf{B}}}{\pi } = \frac{1}{\pi }.
\end{equation}
Using(4.21),(4.22),(4.23)and(4.24),yields
\begin{equation}
\frac{\pi }{2}\mathbf{B}\frac{{\cos (\pi \mathbf{B}/2)}}{{\sin (\pi \mathbf{B}/2)}} = \frac{{{\pi ^2}}}{{16}} + \frac{1}{4}.
\end{equation}
Let us now calculate the value of $ - \mathbf{B}\frac{{\Pi '( - \mathbf{B})}}{{\Pi ( - \mathbf{B})}}$  ,taking the logarithmic derivative on both sides of the equation $\frac{{\pi s}}{{\Pi (s)\Pi ( - s)}} = \sin \pi s$.
Let $s \to \mathbf{B}$,and both sides multiplied by $\mathbf{B}$,yields
\begin{equation}
\mathbf{B}\frac{{\Pi '(\mathbf{B})}}{{\Pi (\mathbf{B})}} - \mathbf{B}\frac{{\Pi '( - \mathbf{B})}}{{\Pi ( - \mathbf{B})}} = \frac{\mathbf{B}}{\mathbf{B}} - \frac{{\pi \mathbf{B}\cos \pi \mathbf{B}}}{{\sin \pi \mathbf{B}}} = 1 - \frac{{\pi \mathbf{B}\cos \pi \mathbf{B}}}{{\sin \pi \mathbf{B}}}.
\end{equation}
And now calculate the value of  $\frac{{\pi \mathbf{B}\cos \pi \mathbf{B}}}{{\sin \pi \mathbf{B}}}$.
Since $\frac{{\cos \pi \mathbf{B}}}{{\sin \pi \mathbf{B}}} = \frac{{\cos \pi (1 - \mathbf{B})}}{{\sin \pi (1 - \mathbf{B})}} =  - \frac{{\cos \pi \mathbf{B}}}{{\sin \pi \mathbf{B}}}$,we have
\[\frac{{\cos \pi \mathbf{B}}}{{\sin \pi \mathbf{B}}} = 0\]
and \[\pi {\mathbf{B}^2}\frac{{\cos \pi \mathbf{B}}}{{\sin \pi \mathbf{B}}} = \pi {(1 - \mathbf{B})^2}\frac{{\cos \pi (1 - \mathbf{B})}}{{\sin \pi (1 - \mathbf{B})}} =  - \pi (1 - 2\mathbf{B} + {\mathbf{B}^2})\frac{{\cos \pi \mathbf{B}}}{{\sin \pi \mathbf{B}}}.\]
Therefore
\[\pi {\mathbf{B}^2}\frac{{\cos \pi \mathbf{B}}}{{\sin \pi \mathbf{B}}} = \pi \mathbf{B}\frac{{\cos \pi \mathbf{B}}}{{\sin \pi \mathbf{B}}}.\]
Since $\pi {\mathbf{B}^2}\frac{{\cos \pi \mathbf{B}}}{{\sin \pi \mathbf{B}}} = \frac{{\pi {\mathbf{B}^2}}}{{\pi \mathbf{B}}} = \mathbf{B} = \frac{1}{2}$,we obtain
\begin{equation}
\pi \mathbf{B}\frac{{\cos \pi \mathbf{B}}}{{\sin \pi \mathbf{B}}} = \frac{1}{2}.
\end{equation}
Using(4.26)and(4.27),we obtain
\begin{equation}
\mathbf{B}\frac{{\Pi '(\mathbf{B})}}{{\Pi (\mathbf{B})}} - \mathbf{B}\frac{{\Pi '( - \mathbf{B})}}{{\Pi ( - \mathbf{B})}} = \frac{1}{2}.
\end{equation}
Using (4.2) of Lemma 4.2 , we obtain
\begin{equation}
\mathbf{B}\frac{{\Pi '(\mathbf{B})}}{{\Pi (\mathbf{B})}} =  - \mathbf{B}\frac{{\Pi '( - \mathbf{B})}}{{\Pi ( - \mathbf{B})}} + \frac{{\Pi '(0)}}{{\Pi (0)}} =  - \mathbf{B}\frac{{\Pi '( - \mathbf{B})}}{{\Pi ( - \mathbf{B})}} - \gamma,
\end{equation}
where $\gamma$ is Euler constant.Using(4.28)and(4.29),we obtain
\begin{equation}
\mathbf{B}\frac{{\Pi '(\mathbf{B})}}{{\Pi (\mathbf{B})}} = \frac{1}{4} - \frac{\gamma }{2}, - \mathbf{B}\frac{{\Pi '( - \mathbf{B})}}{{\Pi ( - \mathbf{B})}} = \frac{1}{4} + \frac{\gamma }{2}.
\end{equation}
Using(4.20),(4.25)and(4.30),we obtain
\[\frac{{\zeta '(\mathbf{B})}}{{\zeta (\mathbf{B})}} = \frac{1}{4} + \frac{\gamma }{2} + \mathbf{B}\log 2\pi  + \frac{{{\pi ^2}}}{{16}} + \frac{1}{4} = \frac{{1 + \gamma  + \log 2\pi }}{2} + \frac{{{\pi ^2}}}{{16}}.\]

\par And now calculate the value of $\sin \pi \mathbf{B}\cdot\frac{{\zeta '(\mathbf{B})}}{{\zeta (\mathbf{B})}}$ .Both sides of (4.19) multiplied by $\sin \pi \mathbf{B}$ , and let $s \to \mathbf{B}$,we obtain
\[\sin \pi \mathbf{B}\cdot\frac{{\zeta '(\mathbf{B})}}{{\zeta (\mathbf{B})}} =  - \sin \pi \mathbf{B}\cdot\frac{{\Pi '( - \mathbf{B})}}{{\Pi ( - \mathbf{B})}} + \log 2\pi  \cdot \sin \pi \mathbf{B} + \frac{\pi }{2}\frac{{\cos (\pi \mathbf{B}/2)}}{{\sin (\pi \mathbf{B}/2)}}\sin \pi \mathbf{B} - \sin \pi \mathbf{B}\cdot\frac{{\zeta '(1 - \mathbf{B})}}{{\zeta (1 - \mathbf{B})}}\]
\[ =  - \sin \pi \mathbf{B}\cdot\frac{{\Pi '( - \mathbf{B})}}{{\Pi ( - \mathbf{B})}} + \log 2\pi  \cdot \pi \mathbf{B} + \pi {\cos ^2}(\pi \mathbf{B}/2) - \sin \pi (1 - \mathbf{B})\cdot\frac{{\zeta '(\mathbf{B})}}{{\zeta (\mathbf{B})}}\]
\[ =  - \sin \pi \mathbf{B}\cdot\frac{{\Pi '( - \mathbf{B})}}{{\Pi ( - \mathbf{B})}} + \frac{{\pi \log 2\pi }}{2} + \pi \frac{{1 + \cos \pi \mathbf{B}}}{2} - \sin \pi \mathbf{B}\cdot\frac{{\zeta '(\mathbf{B})}}{{\zeta (\mathbf{B})}}.\]
Therefore
\begin{equation}
\sin \pi \mathbf{B}\cdot\frac{{\zeta '(\mathbf{B})}}{{\zeta (\mathbf{B})}} =  - \frac{{\sin \pi \mathbf{B}}}{2}\frac{{\Pi '( - \mathbf{B})}}{{\Pi ( - \mathbf{B})}} + \frac{{\pi \log 2\pi }}{4} + \frac{\pi }{4}.
\end{equation}
Firstly, we calculate the value of $\sin \pi \mathbf{B}\cdot\frac{{\Pi '( - \mathbf{B})}}{{\Pi ( - \mathbf{B})}}$.
Using (4.1) of Lemma 4.2, we have
\begin{equation}
\sin \pi \mathbf{B}\cdot\frac{{\Pi '(\mathbf{B})}}{{\Pi (\mathbf{B})}} = \sin ( - \pi \mathbf{B})\cdot\frac{{\Pi '( - \mathbf{B})}}{{\Pi ( - \mathbf{B})}} + \pi \frac{{\Pi '(0)}}{{\Pi (0)}} =  - \sin \pi \mathbf{B}\cdot\frac{{\Pi '( - \mathbf{B})}}{{\Pi ( - \mathbf{B})}} - \pi \gamma .
\end{equation}
Taking the logarithmic derivative on both sides of the equation $\frac{{\pi s}}{{\Pi (s)\Pi ( - s)}} = \sin \pi s$ .
Let $s \to \mathbf{B}$,and both sides multiplied by $\sin \pi \mathbf{B}$,we obtain
\[\sin \pi \mathbf{B}\cdot(\frac{{\Pi '(\mathbf{B})}}{{\Pi (\mathbf{B})}} - \frac{{\Pi '( - \mathbf{B})}}{{\Pi ( - \mathbf{B})}}) = \frac{{\sin \pi \mathbf{B}}}{\mathbf{B}} - \pi \sin \pi \mathbf{B}\frac{{\cos \pi \mathbf{B}}}{{\sin \pi \mathbf{B}}} = \frac{{\sin \pi \mathbf{B}}}{\mathbf{B}} - \pi \cos \pi \mathbf{B} = \frac{{\sin \pi \mathbf{B}}}{\mathbf{B}}.\]
We can prove the equation  $\frac{{\sin \pi \mathbf{B}}}{\mathbf{B}} = \pi \log 2$  by\eqref{1.1},
therefore
\begin{equation}
\sin \pi \mathbf{B}\cdot\frac{{\Pi '(\mathbf{B})}}{{\Pi (\mathbf{B})}} - \sin \pi \mathbf{B}\cdot\frac{{\Pi '( - \mathbf{B})}}{{\Pi ( - \mathbf{B})}} = \pi \log 2.
\end{equation}
Using(4.32)and(4.33),we obtain
\begin{equation}
\sin \pi \mathbf{B}\cdot\frac{{\Pi '( - \mathbf{B})}}{{\Pi ( - \mathbf{B})}} = \frac{{\pi \log 2 + \pi \gamma }}{{ - 2}}.
\end{equation}
Using(4.31)and(4.34),we obtain
\[\sin \pi \mathbf{B}\cdot\frac{{\zeta '(\mathbf{B})}}{{\zeta (\mathbf{B})}} = \frac{{\pi \log 2 + \pi \gamma }}{4} + \frac{{\pi \log 2\pi }}{4} + \frac{\pi }{4} = \frac{\pi }{4}(1 + \gamma  + \log 4\pi ).\]
\end{proof}

\begin{Proposition}
Let ${\lambda _1} = \sum\nolimits_{n = 1}^\infty  [{\sum\nolimits_{k = 1}^\infty  {\frac{1}{{{{(2n + k)}^2}}} - \frac{1}{{2n}}]} }$,${\lambda _2} = \sum\nolimits_{n = 1}^\infty  [{\sum\nolimits_{k = 1}^\infty  {\frac{{{{( - 1)}^{k - 1}}}}{{2n + k}} - \frac{1}{{4n}}]} } ,$  we have \[{\lambda _1} = \frac{{1 + \log 2}}{2} - \frac{5}{{48}}{\pi ^2},\] \[{\lambda _2} = \frac{{1 - 2\log 2}}{4}.\]
\end{Proposition}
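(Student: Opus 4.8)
The plan is to recognise the two inner series as values of the digamma and trigamma functions, to rewrite the outer series as series of the form $\sum_{n\ge 1}h'(n)$, and then to evaluate them by \eqref{1.1} and a rescaled variant of it. After this reorganisation the summands become values of Bernoulli operator functions built from $\log\Pi$, $\log\sin$ and $\log B$ --- precisely the quantities already computed in Proposition 4.1 and in \eqref{1.7}. Concretely, since $\sum_{k\ge 1}(2n+k)^{-2}=\psi'(2n+1)$ and $\sum_{k\ge 1}(-1)^{k-1}(2n+k)^{-1}=\tfrac12\bigl[\psi(n+1)-\psi(n+\tfrac12)\bigr]$ (the standard closed form of an alternating tail), one has
\[
\lambda_1=\sum_{n=1}^{\infty}\Bigl[\psi'(2n+1)-\tfrac1{2n}\Bigr],\qquad
\lambda_2=\sum_{n=1}^{\infty}\Bigl[\tfrac12\bigl(\psi(n+1)-\psi(n+\tfrac12)\bigr)-\tfrac1{4n}\Bigr].
\]

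Next I would produce antiderivatives in the variable $n$. The bracket in $\lambda_1$ equals $\tfrac{d}{dn}\bigl[\tfrac12\psi(2n+1)-\tfrac12\log(2n)\bigr]$, and $z\mapsto\psi(2z+1)-\log(2z)\to0$ at infinity; applying \eqref{1.1} to $y\mapsto\tfrac12 f(2y)$ gives the rescaled rule $\sum_{n\ge 1}f'(2n)=-\tfrac12 f(2B)$, hence $\lambda_1=-\tfrac12\bigl[\psi(2B+1)-\log(2B)\bigr]$. Similarly the bracket in $\lambda_2$ equals $\tfrac{d}{dn}\bigl[\tfrac12\log\Pi(n)-\tfrac12\log\Pi(n-\tfrac12)-\tfrac14\log n\bigr]$, the relevant function tending to $0$ by Stirling, so \eqref{1.1} gives $\lambda_2=-\tfrac12\bigl[\log\Pi(B)-\log\Pi(B-\tfrac12)\bigr]+\tfrac14\log B$.

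It remains to evaluate these Bernoulli operator numbers. Here $\log B=-\gamma$ and $\log(2B)=\log 2-\gamma$ by \eqref{1.7}, $\log\Pi(B)=\tfrac{\log 2\pi-1}{2}-\gamma$ by Proposition 4.1, and the Legendre duplication identity $\Pi(2z)=\tfrac{2^{2z}}{\sqrt\pi}\,\Pi(z)\,\Pi(z-\tfrac12)$ --- used at $z=B$ both directly and after taking its logarithmic derivative --- reduces $\psi(2B+1)$ and $\log\Pi(B-\tfrac12)$ to combinations of $\log\Pi(B)$, $\psi(B+1)=\Pi'(B)/\Pi(B)$, $\psi(B+\tfrac12)=\Pi'(B-\tfrac12)/\Pi(B-\tfrac12)$ and $\log B$, all of which are handled by the logarithmic-derivative and reflection computations of Proposition 4.1 (note $\tfrac{5}{48}\pi^2=\zeta(2)-\tfrac{\pi^2}{16}$, so the term $\pi^2/16$ produced there by $\tfrac\pi2 B\cot\tfrac{\pi B}{2}$ reappears). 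Collecting everything yields $\lambda_1=\tfrac{1+\log 2}{2}-\tfrac{5}{48}\pi^2$ and $\lambda_2=\tfrac{1-2\log 2}{4}$. The main obstacle --- and the point demanding the most care --- is precisely the appearance of the arguments $2B$ and $B-\tfrac12$: formula \eqref{1.1} must only be applied to genuinely decaying functions (hence to the brackets above, never to the individual $\psi$'s, whose naive umbral expansions diverge), and the distinction between primitive equivalence and Bernoulli operator equivalence must be respected, since the duplication and reflection identities are invoked in primitive form before any umbral value is taken. As an independent check I would also derive the two answers by interchanging the order of summation in the original double series, which gives the auxiliary identities $\sum_{m\ge 1}[\psi'(m+1)-\tfrac1m]=1-\zeta(2)$, $\sum_{m\ge 1}(-1)^{m+1}[\psi'(m+1)-\tfrac1m]=\tfrac{\pi^2}{24}-\log 2$ and $\sum_{n=1}^{N}[H_n-H_{2n}+\log 2-\tfrac1{4n}]=(N+\tfrac12)(H_N-H_{2N})+N\log 2$, from which $\lambda_1$ and $\lambda_2$ follow with no reference to $\psi(2B+1)$.
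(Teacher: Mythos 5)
Your reduction of the two brackets to derivatives of decaying functions is fine: with $\sum_{k\ge1}(2n+k)^{-2}=\psi'(2n+1)$ and $\sum_{k\ge1}(-1)^{k-1}(2n+k)^{-1}=\tfrac12[\psi(n+1)-\psi(n+\tfrac12)]$, formula \eqref{1.1} legitimately gives $\lambda_1=-\tfrac12[\psi(2B+1)-\log(2B)]$ and $\lambda_2=-\tfrac12[\log\Pi(B)-\log\Pi(B-\tfrac12)]+\tfrac14\log B$. The gap is the evaluation step you wave at with ``all of which are handled by the computations of Proposition 4.1''. Proposition 4.1 never computes $\Pi'(B)/\Pi(B)$, $\Pi'(B-\tfrac12)/\Pi(B-\tfrac12)$, $\log\Pi(B-\tfrac12)$ or any umbral value at the arguments $2B$ or $B\pm\tfrac12$; it only computes the combinations $B\,\Pi'(\pm B)/\Pi(\pm B)$ and $\sin\pi B\cdot\Pi'(-B)/\Pi(-B)$, and it is precisely those prefactors that make the quantities finite. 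The plain values your duplication/reflection step requires are divergent or singular objects in this calculus: \eqref{1.1} does not apply to $\psi(B+1)$ or $\psi(B+\tfrac12)$ (no decay), their umbral Taylor series diverge (as you admit), the argument $B-\tfrac12$ sits exactly at the singular point the paper itself flags (Remark 6.10 treats $\log(\tfrac12-B)$ as a singularity, and naive manipulations there are inconsistent: $(B-\tfrac12)^{-1}$ comes out as $0$ via $B\to 1-B$ but as $\pi^2/2$ via recurrence plus reflection), and the argument $2B$ lies outside umbral convergence (e.g. $\psi(2B+1)=\log(B'+2B)$ expands into $\sum_k B_{2k}\zeta(2k+1)4^{-k}$ up to harmless terms, which diverges). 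So the decisive ``collecting everything'' is exactly what is missing, and it is not routine.

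For comparison, the paper avoids these shifted arguments altogether: it substitutes $s\to B$ (and, for $\lambda_2$, multiplies by $\sin\pi B$) in the partial-fraction expansion (4.35) of $\zeta'/\zeta$, kills $\sum_\rho(B-\rho)^{-1}$ by the $\rho\leftrightarrow 1-\rho$ symmetry, identifies $(B+2n)^{-1}=\sum_k(2n+k)^{-2}$ and $\sin\pi B\cdot(B+2n)^{-1}=\pi\sum_k(-1)^{k-1}(2n+k)^{-1}$ via (1.3) and \eqref{1.1}, and then compares with the values of $\zeta'(B)/\zeta(B)$ and $\sin\pi B\cdot\zeta'(B)/\zeta(B)$ already obtained in Proposition 4.1. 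On the other hand, your appended ``independent check'' is correct: the three auxiliary identities are true (the first two follow by interchanging the order of summation, the third is an exact partial-sum identity, and with $H_N-H_{2N}=-\log 2+\tfrac1{4N}+O(N^{-2})$ they yield exactly the stated values of $\lambda_1$ and $\lambda_2$), and an argument of that type, for instance grouping the $\lambda_1$ double series by $m=2n+k$ and writing the inner sum of $\lambda_2$ as $\int_0^1 x^{2n}(1+x)^{-1}dx$, is a complete and fully rigorous proof, indeed more solid than either umbral computation. If you promote that check to the main argument you are done; as written, the primary route stops short at the key evaluation.
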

\begin{proof}
We have the following equation (see \cite{E} p.158),
\begin{equation}
\frac{{\zeta '(s)}}{{\zeta (s)}} = \frac{{ - 1}}{{s - 1}} + \frac{{\gamma  + \log \pi }}{2} + \sum\limits_\rho  {\frac{1}{{s - \rho }}}  + \sum\limits_{n = 1}^\infty  {(\frac{1}{{s + 2n}} - \frac{1}{{2n}})} .
\end{equation}
where $\rho $ are zeros of $\xi (s)$.
Let $s \to \mathbf{B}$,we have
\begin{equation}
\frac{{\zeta '(\mathbf{B})}}{{\zeta (\mathbf{B})}} = \frac{{ - 1}}{{\mathbf{B} - 1}} + \frac{{\gamma  + \log \pi }}{2} + \sum\limits_\rho  {\frac{1}{{\mathbf{B} - \rho }}}  + \sum\limits_{n = 1}^\infty  {(\frac{1}{{\mathbf{B} + 2n}} - \frac{1}{{2n}})}.
\end{equation}
Since
\[\sum\limits_\rho  {\frac{1}{{\mathbf{B} - \rho }}}  = \sum\limits_\rho  {\frac{1}{{1 - \mathbf{B} - \rho }}}  = \sum\limits_\rho  {\frac{1}{{1 - \rho  - \mathbf{B}}}}  = \sum\limits_\rho  {\frac{1}{{\rho  - \mathbf{B}}}}, \]
we obtain
\begin{equation}
\sum\limits_\rho  {\frac{1}{{\rho  - \mathbf{B}}}}  = 0.
\end{equation}
Using(1.3),we have
\begin{equation}
{(\mathbf{B} + 2n)^{ - 1}} = (2 - 1)(\zeta (2) - {1^{ - 2}} - {2^{ - 2}} - {3^{ - 2}} - ... - {(2n)^{ - 2}}) = \sum\limits_{k = 1}^\infty  {\frac{1}{{{{(2n + k)}^2}}}}.
\end{equation}
Using(4.36),(4.37)and(4.38),we obtain
\begin{equation}
\frac{{\zeta '(\mathbf{B})}}{{\zeta (\mathbf{B})}} = \frac{{ - 1}}{{\mathbf{B} - 1}} + \frac{{\gamma  + \log \pi }}{2} + {\lambda _1} = \frac{1}{\mathbf{B}} + \frac{{\gamma  + \log \pi }}{2} + {\lambda _1} = \frac{{{\pi ^2}}}{6} + \frac{{\gamma  + \log \pi }}{2} + {\lambda _1}.
\end{equation}
By Proposition 4.1 and(4.39),we obtain
\[{\lambda _1} = \frac{{1 + \log 2}}{2} - \frac{5}{{48}}{\pi ^2}.\]

\par Both sides of (4.35) multiplied by $\sin \pi \mathbf{B}$ , and let $s \to \mathbf{B}$,we obtain
\[\sin \pi \mathbf{B}\cdot\frac{{\zeta '(\mathbf{B})}}{{\zeta (\mathbf{B})}} = \frac{{\sin \pi \mathbf{B}}}{{1 - \mathbf{B}}} + \frac{{\gamma  + \log \pi }}{2}\sin \pi \mathbf{B} + \sum\limits_\rho  {\frac{{\sin \pi \mathbf{B}}}{{\mathbf{B} - \rho }}}  + \sum\limits_{n = 1}^\infty  {\sin \pi \mathbf{B}\cdot(\frac{1}{{\mathbf{B} + 2n}} - \frac{1}{{2n}})} \]
\[ = \frac{{\sin \pi (1 - \mathbf{B})}}{\mathbf{B}} + \frac{{\gamma  + \log \pi }}{2}\pi \mathbf{B} + \sum\limits_\rho  {\frac{{\sin \pi \mathbf{B}}}{{\mathbf{B} - \rho }}}  + \sum\limits_{n = 1}^\infty  {(\frac{{\sin \pi \mathbf{B}}}{{\mathbf{B} + 2n}} - \frac{{\pi \mathbf{B}}}{{2n}})} \]
\[ = \frac{{\sin \pi \mathbf{B}}}{\mathbf{B}} + \frac{{\gamma  + \log \pi }}{4}\pi  + \sum\limits_\rho  {\frac{{\sin \pi \mathbf{B}}}{{\mathbf{B} - \rho }}}  + \sum\limits_{n = 1}^\infty  {(\frac{{\sin \pi \mathbf{B}}}{{\mathbf{B} + 2n}} - \frac{\pi }{{4n}})} \]
\begin{equation}
 = \pi \log 2 + \frac{{\gamma  + \log \pi }}{4}\pi  + \sum\limits_\rho  {\frac{{\sin \pi \mathbf{B}}}{{\mathbf{B} - \rho }}}  + \sum\limits_{n = 1}^\infty  {(\frac{{\sin \pi \mathbf{B}}}{{\mathbf{B} + 2n}} - \frac{\pi }{{4n}})} .
\end{equation}
Similarly,we can prove that
\begin{equation}
\sum\limits_\rho  {\frac{{\sin \pi \mathbf{B}}}{{\mathbf{B} - \rho }}}  = 0.
\end{equation}
Using\eqref{1.1},we have
\begin{equation}
\frac{{\sin \pi \mathbf{B}}}{{\mathbf{B} + 2n}} =  - \sum\limits_{k = 1}^\infty  {\frac{{\pi (2n + k)\cos k\pi  - \sin k\pi }}{{{{(2n + k)}^2}}}}  = \pi \sum\limits_{k = 1}^\infty  {\frac{{{{( - 1)}^{k - 1}}}}{{2n + k}}}.
\end{equation}
Using(4.40),(4.41)and(4.42),we obtain
\[\sin \pi \mathbf{B}\cdot\frac{{\zeta '(\mathbf{B})}}{{\zeta (\mathbf{B})}} = \pi \log 2 + \frac{{\gamma  + \log \pi }}{4}\pi  + \pi \sum\limits_{n = 1}^\infty  [{\sum\limits_{k = 1}^\infty  {\frac{{{{( - 1)}^{k - 1}}}}{{2n + k}}}  - \frac{1}{{4n}}]} \]
\begin{equation}
 = \pi \log 2 + \frac{{\gamma  + \log \pi }}{4}\pi  + \pi {\lambda _2}.
\end{equation}
By Proposition 4.1 and(4.43),we obtain
\[{\lambda _2} = \frac{{1 - 2\log 2}}{4}.\]
\end{proof}

\vspace{0.3in}
\section{\textbf{The functional equation of  $\log \Pi (\mathbf{B}s)$}}
\begin{Proposition}
The function $\log \Pi (\mathbf{B}s)$ satisfy the following functional equation
\begin{equation}
\log \Pi (\mathbf{B}s) = s\log \Pi (\frac{\mathbf{B}}{s}) + \frac{{s + 1}}{2}\log s + (1 - s)\frac{{\log 2\pi }}{2}.
\end{equation}
\end{Proposition}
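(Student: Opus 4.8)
The plan is to obtain (5.1) from the reflection formula for $\Pi$ together with Lemma 4.2, reducing it to a single companion identity for $\log\sin(\pi Bs)$. First I would take the reflection formula $\dfrac{\pi s}{\Pi(s)\Pi(-s)}=\sin\pi s$ already used in Section 4, i.e. $\log\Pi(w)+\log\Pi(-w)=\log\pi+\log w-\log\sin\pi w$, and substitute $w=Bs$ and $w=B/s$. As everywhere in this paper, these substitutions must be read through the Taylor series about $0$, so that $\log\sin(\pi Bs)$ denotes the Bernoulli operator equivalent quantity $\log\pi+\log B+\log s-\sum_{m\ge1}\frac{\zeta(2m)}{m}(Bs)^{2m}$, and not the logarithm of the collapsed number $\sin(\pi Bs)$ --- collapsing first and taking logarithms afterwards is precisely the step the Introduction warns against.

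Next I would apply Lemma 4.2, eq. (4.1), to $f=\log\Pi$ expanded at $a=0$ (admissible, since $\log\Pi$ is analytic at the origin, with $f'(0)=\Pi'(0)=-\gamma$), which gives $\log\Pi(Bz)=\log\Pi(-Bz)-\gamma z$. Using this with $z=s$ and $z=1/s$ to eliminate $\log\Pi(-Bs)$ and $\log\Pi(-B/s)$ from the two reflection instances yields
\[\log\Pi(Bs)=\tfrac{1}{2}\!\left[\log\pi+\log B+\log s-\log\sin(\pi Bs)-\gamma s\right]\]
and the same formula with $s$ replaced by $1/s$. Forming $\log\Pi(Bs)-s\log\Pi(B/s)$, the $\log\pi$ and $\log s$ pieces assemble into $\tfrac{1-s}{2}\log\pi+\tfrac{1+s}{2}\log s$, while the $\log B$ pieces cancel the $\gamma$ pieces on using $\log B=-\gamma$ (eq. \eqref{1.7}). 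Comparing with the right-hand side of (5.1) and writing $\log 2\pi=\log 2+\log\pi$, one sees that (5.1) is equivalent to the single assertion
\[\log\sin(\pi Bs)-s\log\sin(\pi B/s)=(s-1)\log 2 .\]

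This last identity is the crux of the argument. I would prove it by the very device used in Section 4: expand $\log\sin\pi w=\log\pi+\log w-\sum_{m\ge1}\frac{\zeta(2m)}{m}w^{2m}$, put $w=Bs$ and $w=B/s$, use $(Bs)^{2m}=s^{2m}B_{2m}$ together with $(1-B)^{n}=B^{n}$, and evaluate the resulting Bernoulli operator series against the explicit small-argument values already at hand --- for instance $\sin(\pi B/2)=\pi/4$, $\log\sin\tfrac{\pi B}{2}=\tfrac{1}{2}-\log 2$, and $\dfrac{\sin\pi B}{B}=\pi\log 2$, each of which is itself obtained from the Euler--Maclaurin formula \eqref{1.1} rather than by collapsing. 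The genuine obstacle is to give a consistent meaning to, and to compute, the divergent operator series $\sum_{m\ge1}\frac{\zeta(2m)}{m}(Bs)^{2m}$ and to check that it transforms correctly under $s\mapsto 1/s$; once that is done the remainder of the proof is routine operator algebra, provided one remains scrupulous throughout about the distinction between primitive equivalence and Bernoulli operator equivalence.
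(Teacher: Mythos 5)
Your reduction is carried out correctly and in fact reproduces half of the paper's own argument: the reflection formula $\pi s/(\Pi(s)\Pi(-s))=\sin\pi s$ at $w=Bs$ together with Lemma 4.2 and $\log B=-\gamma$ is exactly how the paper arrives at its equation (5.7), $\log\Pi(Bs)=\tfrac12(-\gamma s-\gamma+\log\pi s-\log\sin\pi Bs)$. The problem is what you do with $\log\Pi(B/s)$. After your elimination, (5.1) becomes equivalent to the identity
\[
\log\sin(\pi Bs)-s\log\sin\!\left(\tfrac{\pi B}{s}\right)=(s-1)\log 2 ,
\]
and you leave this unproved, saying only that it should follow by expanding $\log\sin$, collapsing $(Bs)^{2m}$ to $B_{2m}s^{2m}$, and "routine operator algebra" against special values such as $\sin(\pi B/2)=\pi/4$ and $\sin\pi B/B=\pi\log 2$. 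That is a genuine gap, not a routine step: after your own reduction this sine identity is \emph{equivalent} to Proposition 5.1, so the entire content of the proposition (the nontrivial behaviour under $s\mapsto 1/s$; by Remark 5.2 it is the Ramanujan--Guinand transformation) has merely been relocated into it. The series $\sum_{m\ge1}\frac{(-1)^{m+1}(2\pi)^{2m}B_{2m}^2}{2m\,(2m)!}\,s^{2m}$ obtained by collapsing diverges for every $s\neq 0$, and knowing its "value" at isolated points like $s=1/2$ or $s=1$ cannot determine how it transforms as a function of $s$; no amount of formal rearrangement of that divergent series will produce the term $(s-1)\log 2$.

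The paper supplies precisely the missing input that your route omits: it applies the Stirling asymptotic formula for $\log\Pi$ at the argument $B/s$ (its (5.2)), where the inverse powers $B^{1-2k}$ in the Stirling tail are converted by $B^{1-2k}=\zeta(2k)(2k-1)=\frac{(-1)^{k+1}(2\pi)^{2k}B_{2k}}{2(2k)!}$ into the coefficients of the $\log\sin(\pi Bs)$ expansion (its (5.3)); combining that closed form of $\log\Pi(B/s)$ with the reflection-formula expression (5.7) for $\log\Pi(Bs)$ yields (5.1). In other words, the link between the $B/s$ and $Bs$ regimes is furnished by Stirling's formula, not by the reflection formula alone. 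To complete your proof you would have to either import the Stirling step as the paper does, or establish the sine identity from some independent source (e.g.\ the Ramanujan--Guinand or Dedekind eta transformation), which would make the argument circular relative to Remark 5.2 rather than a proof.
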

\begin{proof}
Using the Stirling formula (see \cite{B} p.109), let $s \to \mathbf{B}/s$, we obtain
\begin{equation}
\log \Pi (\frac{\mathbf{B}}{s}) = (\frac{\mathbf{B}}{s} + \frac{1}{2})\log \frac{\mathbf{B}}{s} - \frac{\mathbf{B}}{s} + \frac{{\log 2\pi }}{2} + \sum\limits_{n = 1}^\infty  {\frac{{{{( - 1)}^{n + 1}}B_{2n}^2{{(2\pi )}^{2n}}}}{{2n \cdot 2 \cdot (2n)!}}} {s^{2n - 1}}.
\end{equation}
Note that we have used the following formula
\[{\mathbf{B}^{1 - 2k}} = \zeta (2k)(2k - 1) = \frac{{{{( - 1)}^{k + 1}}{{(2\pi )}^{2n}}{B_{2k}}}}{{2 \cdot (2k)!}},\]
where k is positive integer.Using(4.9),we have
\begin{equation}
\sum\limits_{n = 1}^\infty  {\frac{{{{( - 1)}^{n + 1}}B_{2n}^2{{(2\pi )}^{2n}}}}{{2n \cdot 2 \cdot (2n)!}}} {s^{2n - 1}} =  - \frac{{\log \sin \pi \mathbf{B}s}}{{2s}} + \frac{{\log \pi }}{{2s}} + \frac{{\log \mathbf{B}s}}{{2s}}.
\end{equation}
Using(5.2)and(5.3),we have
\begin{equation}
\log \Pi (\frac{\mathbf{B}}{s}) = (\frac{\mathbf{B}}{s} + \frac{1}{2})\log \frac{\mathbf{B}}{s} - \frac{\mathbf{B}}{s} + \frac{{\log 2\pi }}{2} - \frac{{\log \sin \pi \mathbf{B}s}}{{2s}} + \frac{{\log \pi }}{{2s}} + \frac{{\log \mathbf{B}s}}{{2s}}.
\end{equation}
By \[\frac{{\pi s}}{{\Pi (s)\Pi ( - s)}} = \sin \pi s,\]
we have
\begin{equation}
\log \Pi (\mathbf{B}s) + \log \Pi ( - \mathbf{B}s) = \log \pi \mathbf{B}s - \log \sin \pi \mathbf{B}s.
\end{equation}
Using (4.1) of Lemma 4.2 , we have
\[\log \Pi (\mathbf{B}s) = \log \Pi ( - \mathbf{B}s) + \frac{{\Pi '(0)}}{{\Pi (0)}}s.\]
Therefore
\begin{equation}
\log \Pi (\mathbf{B}s) = \log \Pi ( - \mathbf{B}s) - \gamma s.
\end{equation}
Using(5.5)and(5.6),we have
\begin{equation}
\log \Pi (\mathbf{B}s) = \frac{1}{2}( - \gamma s - \gamma  + \log \pi s - \log \sin \pi \mathbf{B}s).
\end{equation}
Using(5.4)and(5.7),we obtain(5.1).
\end{proof}
\begin{remark}
I find that Proposition 5.1 is equivalent to Ramanujan and A.P. Guinand's result,but the proofs are not the same(see \cite{A} ,\cite{H}).
\end{remark}

\vspace{0.3in}
\section{\textbf{The functional equation of $\log \zeta (\mathbf{B} + s)$ }}
\begin{theorem}
The function $\log \zeta (\mathbf{B} + s)$ satisfy the following functional equation
\begin{equation}
\log \zeta (\mathbf{B} - s) - \log \zeta (\mathbf{B} + s) = s\frac{{\Pi '(s)}}{{\Pi (s)}} - \frac{1}{2} - s - s\log 2\pi  + \frac{\pi }{2}s\frac{{\cos \pi s}}{{\sin \pi s}} + \frac{\pi }{4}(2\frac{{\cos \pi s}}{{\sin \pi s}} - \frac{{\cos (\pi s/2)}}{{\sin (\pi s/2)}}).
\end{equation}
Moreover the function $\log \zeta (\mathbf{B} + s)$ has poles at the non-positive integers(i.e. at $s =0, -1, -2, -3, ...$).
\end{theorem}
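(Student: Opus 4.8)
The plan is to run the ``push the variable through the operator'' argument of Section~4 while carrying a free parameter $s$. Writing Riemann's functional equation in the logarithmic form
\[\log \zeta (w) = w\log 2 + (w-1)\log \pi  + \log \sin \tfrac{\pi w}{2} + \log \Pi (-w) + \log \zeta (1-w),\]
I would substitute $w \to B+s$ and then $w \to B-s$; in each case the last term is rewritten by the Bernoulli operator equivalence $f(1-B)=f(B)$ (which is $(1-B)^{n}=B^{n}$), so that $\log \zeta (1-B-s)=\log \zeta (B-s)$ and $\log \zeta (1-B+s)=\log \zeta (B+s)$. The two substitutions become
\[\log \zeta (B+s) = (B+s)\log 2\pi  - \log \pi  + \log \sin \tfrac{\pi (B+s)}{2} + \log \Pi (-B-s) + \log \zeta (B-s),\]
\[\log \zeta (B-s) = (B-s)\log 2\pi  - \log \pi  + \log \sin \tfrac{\pi (B-s)}{2} + \log \Pi (s-B) + \log \zeta (B+s).\]
Subtracting these and solving for $\log \zeta (B-s)-\log \zeta (B+s)$ cancels the $\zeta$'s on the right and collapses $s\log 2 + s\log \pi$ into $s\log 2\pi$, leaving that difference written only in terms of $\log \Pi (-B-s)$, $\log \Pi (s-B)$ and the $B$-shifted sines. (Equivalently one may put $w\to B+s$ in the logarithmic-derivative equation (4.19), use $\zeta'(1-B-s)/\zeta(1-B-s) = \zeta'(B-s)/\zeta(B-s)$, and integrate in $s$ from $0$, where both sides vanish.)

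The second stage is to recast those $B$-shifted factors into the exact shape of (6.1). Here I would apply equation (4.1) of Lemma~4.2 with $f=\log \Pi$ to move a factor $B$ between an ``$a+B$'' slot and an ``$a-B$'' slot, each move producing a term $\pm\,\Pi'(\pm s)/\Pi(\pm s)$; the logarithmic derivative of the reflection formula $\pi w/(\Pi (w)\Pi (-w))=\sin \pi w$, i.e. $\Pi'(s)/\Pi(s)-\Pi'(-s)/\Pi(-s)=1/s-\pi\frac{\cos \pi s}{\sin \pi s}$, then turns every $\Pi'(-s)/\Pi(-s)$ into $\Pi'(s)/\Pi(s)$ plus $\frac{\cos \pi s}{\sin \pi s}$. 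The reflection formula itself at $w=B\pm s$, together with $\sin \pi w = 2\sin \tfrac{\pi w}{2}\cos \tfrac{\pi w}{2}$, is used to remove the surviving $\log \Pi (B\pm s)$ against the $\log \sin \tfrac{\pi (B\pm s)}{2}$. Since these relations among $\log \Pi (B+s),\log \Pi (B-s),\log \Pi (-B-s),\log \Pi (s-B)$ are not all independent, one has to break the loop with the Stirling expansion of $\log \Pi$ and with the explicit values of $B\log B$, $\log \sin \tfrac{\pi B}{2}$, $\log \Pi (B)$ from Proposition~4.1 together with $\gamma=-\log B$ from (1.7); assembled correctly these produce exactly $s\frac{\Pi'(s)}{\Pi(s)}$, $\frac{\pi}{2}s\frac{\cos \pi s}{\sin \pi s}$, $\frac{\pi}{4}\bigl(2\frac{\cos \pi s}{\sin \pi s}-\frac{\cos (\pi s/2)}{\sin (\pi s/2)}\bigr)$, and the remaining constant/linear part $-\tfrac12 - s - s\log 2\pi$.

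For the statement about poles I would argue in two steps. For $\mathrm{Re}\,s>0$ the Dirichlet series used in the proof of Theorem~3.1, $\log \zeta (B+s)=\sum_{p}\sum_{n\ge 1}\tfrac{\log p}{p^{n}-1}\,p^{-ns}$, converges absolutely, so $\log \zeta (B+s)$ is analytic there; but as $s\to 0^{+}$ its $n=1$ part $\sum_{p}\tfrac{\log p}{p-1}\,p^{-s}$ behaves like $\sum_{p}\tfrac{\log p}{p^{1+s}}$, which has a simple pole at $s=0$, so $\log \zeta (B+s)$ has a pole at $s=0$. The other poles are forced by the functional equation (6.1): its right-hand side is regular for $\mathrm{Re}\,s>0$ except for simple poles at the positive integers $s=k$, coming from the cotangent terms, whose residues $(k+1)/2$ (for $k$ odd) and $k/2$ (for $k$ even) do not cancel; as $\log \zeta (B+s)$ is analytic for $\mathrm{Re}\,s>0$, it follows that $\log \zeta (B-s)$ has a simple pole at every positive integer, i.e. $\log \zeta (B+s)$ has a pole at every negative integer. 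Together with $s=0$ this gives poles at precisely the non-positive integers.

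The main obstacle is the second stage. The web of Lemma~4.2 and reflection identities among the four quantities $\log \Pi (\pm B\pm s)$ is degenerate, so one genuinely needs the Stirling series and the Proposition~4.1 constants to pin the answer down; and every manipulation must be correctly classified as \emph{primitive equivalent} or \emph{Bernoulli operator equivalent} — it is legitimate to take logarithms of the classical functional equation before substituting $w\to B\pm s$, but afterwards only differentiation (and, on the imaginary axis, the evaluations already established) may be applied, and the reductions $\zeta (1-B\mp s)\to \zeta (B\pm s)$ are valid only away from the Bernoulli operator singularities of $\zeta$.
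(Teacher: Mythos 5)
Your first step coincides with the paper's: take logarithms of the classical functional equation, substitute $s \to B-s$, and use $(1-B)^{n}=B^{n}$ to replace $\log \zeta (1-B+s)$ by $\log \zeta (B+s)$, so that everything reduces to evaluating $\log \Pi (s-B)$ and $\log \sin \frac{\pi (B-s)}{2}$ in closed form (only one of your two substitutions is needed; each alone already isolates the difference). Your pole discussion is also essentially the paper's: analyticity of $\log \zeta (B+s)$ for $\mathrm{Re}\,s>0$ from the Dirichlet series of Section~3, non-cancelling cotangent residues at the positive integers forcing poles of $\log \zeta (B-s)$ there, and a separate argument at $s=0$ (the paper computes $\log \zeta (B)$ by \eqref{1.1} and exhibits the divergent term $\lim_{s\to 1}\frac{1}{s-1}$; your Dirichlet-series variant is acceptable in the same spirit).

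The gap is in your second stage, which is where the theorem actually lives. The relations you list --- (4.1) of Lemma~4.2 applied to $\log \Pi$, the reflection formula and its logarithmic derivative, and the double-angle identity --- form, as you yourself admit, a degenerate system: they determine only sums and differences of the four quantities $\log \Pi (\pm B\pm s)$, never $\log \Pi (s-B)$ itself, so the terms $s\frac{\Pi '(s)}{\Pi (s)}-\frac{1}{2}-s$ cannot come out of them. Your proposed degeneracy-breaker, the Stirling expansion plus the Proposition~4.1 constants ``assembled correctly'', is a promissory note, and it is doubtful it can be cashed: the operator Stirling series at the shifted argument $s\pm B$ involves $(B+s)^{1-2n}$, i.e.\ Hurwitz-zeta tails, rather than the values $B^{1-2k}=\zeta (2k)(2k-1)$ that made the series summable at argument $B$ (Section~4) or $B/s$ (Section~5), and it does not close in elementary terms. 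The paper breaks the degeneracy by two inputs you never invoke: the recurrence $\Pi (s)=s\Pi (s-1)$, which combined with (4.1)--(4.2) gives Lemma~6.2, $\log (B+s)=\frac{\Pi '(s)}{\Pi (s)}$, its integrated form Lemma~6.4, and hence Lemma~6.7, $\log \Pi (s-B)=s\frac{\Pi '(s)}{\Pi (s)}-\frac{1-\log 2\pi }{2}-s$; and the Legendre duplication formula $\Pi (s)=2^{s}\Pi (s/2)\Pi (\frac{s-1}{2})\pi ^{-1/2}$, which gives Lemma~6.8 and thence Lemma~6.9 for $\log \sin \frac{\pi (B-s)}{2}$, producing in particular the half-angle cotangent $\frac{\pi }{4}\frac{\cos (\pi s/2)}{\sin (\pi s/2)}$ that reflection plus double-angle alone cannot generate. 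Without these (or an equally explicit substitute) your outline does not actually derive the right-hand side of (6.1).
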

\begin{lemma}
\[\log (\mathbf{B} + s) = \frac{{\Pi '(s)}}{{\Pi (s)}}.\]
\end{lemma}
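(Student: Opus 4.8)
The plan is to reduce $\log(B+s)$ to the two basic evaluations already obtained in the Introduction, namely $\log B=-\gamma$ from \eqref{1.7} and $B^{1-\sigma}=\zeta(\sigma)(\sigma-1)$ from (1.2), and then to recognize the resulting series as the classical partial-fraction expansion of $\Pi'(s)/\Pi(s)$. First I would write, as a primitive-equivalent identity in the symbol $B$,
\[
\log(B+s)=\log B+\log\!\left(1+\frac{s}{B}\right),
\]
and use \eqref{1.7} to replace $\log B$ by $-\gamma$, so that the whole problem is to evaluate the Bernoulli operator function $\log(1+s/B)$.

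For that term I would expand by Taylor's theorem, $\log(1+u)=\sum_{k\ge 1}\frac{(-1)^{k-1}}{k}u^{k}$ with $u=sB^{-1}$, obtaining
\[
\log\!\left(1+\frac{s}{B}\right)=\sum_{k\ge 1}\frac{(-1)^{k-1}s^{k}}{k}\,B^{-k}.
\]
Substituting $B^{-k}=B^{1-(k+1)}=k\,\zeta(k+1)$ — this is (1.2) with $s$ replaced by $k+1$, equivalently (1.15) — cancels the factor $k$ and produces the ordinary power series $\sum_{k\ge 1}(-1)^{k-1}\zeta(k+1)s^{k}$, which has radius of convergence $1$. Writing $\zeta(k+1)=\sum_{n\ge 1}n^{-(k+1)}$ and interchanging the two summations (legitimate for $|s|<1$, since all the terms are positive after pulling out the sign), the inner geometric series sums to $\frac{s}{n(n+s)}=\frac1n-\frac1{n+s}$, whence
\[
\log\!\left(1+\frac{s}{B}\right)=\sum_{n\ge 1}\left(\frac1n-\frac1{n+s}\right)
\]
for $|s|<1$, and then for all $s$ outside the negative integers by the analytic-continuation principle of the Introduction (the right-hand side converges for every such $s$).

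Putting the pieces together gives $\log(B+s)=-\gamma+\sum_{n\ge 1}\bigl(\frac1n-\frac1{n+s}\bigr)$, and the right-hand side is exactly the standard series for $\psi(s+1)=\Pi'(s)/\Pi(s)$ — it is what one gets by differentiating term by term the product formula for $\log\Pi$ on \cite{B} p.8 that is already used in the proof of Theorem~3.1, and at $s=0$ it reduces to $\Pi'(0)/\Pi(0)=-\gamma$ as in (4.29). This establishes the lemma.

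The main obstacle is entirely one of bookkeeping about convergence and continuation: the Taylor expansion of $\log(1+s/B)$ and the interchange of the double sum are only valid in the disc $|s|<1$, so one must invoke the analytic-continuation principle to extend the identity to general $s$, and one should note that the pole set $s=-1,-2,-3,\dots$ of $\Pi'(s)/\Pi(s)$ is precisely where the limiting series diverges, so the two sides have the same domain. Everything else is routine manipulation of identities already in hand.
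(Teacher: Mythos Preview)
Your proof is correct, but it proceeds quite differently from the paper's argument. The paper gives a two-line proof using the recurrence $\Pi(s)=s\Pi(s-1)$ together with the reflection identity (4.1) of Lemma~4.2: from $\Pi(s)=s\Pi(s-1)$ one gets $\log\Pi(B+s)=\log(B+s)+\log\Pi(B+s-1)=\log(B+s)+\log\Pi(-B+s)$, while (4.1) with $f=\log\Pi$ gives $\log\Pi(B+s)=\log\Pi(-B+s)+\Pi'(s)/\Pi(s)$; subtracting yields the lemma immediately.

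Your route instead computes $\log(B+s)$ from scratch via the Section~1 evaluations $\log B=-\gamma$ and $B^{-k}=k\,\zeta(k+1)$, and then matches the resulting series $-\gamma+\sum_{n\ge1}(\tfrac1n-\tfrac1{n+s})$ against the classical partial-fraction expansion of the digamma function. This is more computational but has the virtue of being entirely self-contained within the Introduction: it does not invoke Lemma~4.2 at all, and it makes explicit why the poles of $\log(B+s)$ sit exactly at the negative integers. The paper's argument, by contrast, is slicker and exhibits the structural reason for the identity (the Gamma recurrence plus Bernoulli reflection), but it presupposes the machinery of Lemma~4.2 and the reader must separately know the pole structure of $\Pi'/\Pi$. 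One small wording point: your justification of the interchange of sums should say ``absolute convergence'' rather than ``all terms positive after pulling out the sign'', since the signs alternate in $k$; the conclusion is unaffected.
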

\begin{proof}
By $\Pi (s) = s\Pi (s - 1)$ ,we have
\[\log \Pi (\mathbf{B} + s) = \log (\mathbf{B} + s) + \log \Pi (\mathbf{B} + s - 1)\]
\begin{equation}
 = \log (\mathbf{B} + s) + \log \Pi ( - \mathbf{B} + s).
\end{equation}
Using(4.1),we have
\begin{equation}
\log \Pi (\mathbf{B} + s) = \log \Pi ( - \mathbf{B} + s) + \frac{{\Pi '(s)}}{{\Pi (s)}}.
\end{equation}
Using(6.2)and(6.3),we complete the proof.
\end{proof}
\begin{remark}
In particular,set s=0 in Lemma 6.2,we obtain \eqref{1.7} again.The function $\log (\mathbf{B} + s)$ has poles at the negative integers (i.e. at $s = -1, -2, -3, ...$).
\end{remark}
\begin{lemma}
\[(\mathbf{B} + s)\log (\mathbf{B} + s) - \mathbf{B}\log \mathbf{B} - s = \log \Pi (s).\]
\end{lemma}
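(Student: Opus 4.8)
The plan is to obtain this identity from Lemma 6.2 by antidifferentiating in $s$ (treating the Bernoulli operator $B$ as a constant, which the conventions of the Introduction permit) and then pinning down the constant of integration by evaluating at a convenient point. Concretely, I would set $F(s) := (B+s)\log(B+s) - B\log B - s$ and $G(s) := \log\Pi(s)$ and show $F \equiv G$ by checking $F' = G'$ and $F(0) = G(0)$.

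First I would differentiate $F$. By the product rule, $\frac{d}{ds}\bigl[(B+s)\log(B+s)\bigr] = \log(B+s) + (B+s)\cdot\frac{1}{B+s} = \log(B+s) + 1$; the term $-B\log B$ is constant in $s$, and $\frac{d}{ds}(-s) = -1$, so $F'(s) = \log(B+s)$. Next, Lemma 6.2 gives $\log(B+s) = \frac{\Pi'(s)}{\Pi(s)} = \frac{d}{ds}\log\Pi(s) = G'(s)$. Hence $F - G$ has identically vanishing $s$-derivative and is therefore a constant (possibly involving $B$). Finally I would evaluate at $s = 0$: $F(0) = B\log B - B\log B - 0 = 0$ and $G(0) = \log\Pi(0) = \log 1 = 0$, so the constant is $0$ and $F(s) = G(s)$, which is the asserted formula.

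The only delicate point is the legitimacy of the "integrate, then fix the constant" step inside the Bernoulli operator calculus: one must be confident that taking the $s$-derivative and then specializing at $s = 0$ preserves Bernoulli operator equivalence. The Introduction already records that differentiating a Bernoulli operator equivalent equation yields another such equation, and Lemma 6.2 supplies the closed form for $\log(B+s)$; granting these, every remaining step is elementary. An equivalent route I would note is to integrate Lemma 6.2 directly, $\int \log(B+s)\,ds = (B+s)\log(B+s) - (B+s) + C$, equate the result with $\log\Pi(s)$, and solve $B\log B - B + C = \log\Pi(0) = 0$ for $C = B - B\log B$; substituting back and using $-(B+s)+B = -s$ recovers exactly $(B+s)\log(B+s) - B\log B - s = \log\Pi(s)$.
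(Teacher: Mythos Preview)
Your proof is correct and follows essentially the same approach as the paper: integrate Lemma~6.2 in $s$ and fix the constant of integration by evaluating at $s=0$. Your ``equivalent route'' is in fact exactly the paper's argument, and your first presentation (checking $F'=G'$ and $F(0)=G(0)$) is just the same idea phrased contrapositively.
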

\begin{proof}
By Lemma 6.2,we have
\[\int {\log (\mathbf{B} + s)} ds = \int {\frac{{\Pi '(s)}}{{\Pi (s)}}} ds.\]
Therefore
\[(\mathbf{B} + s)\log (\mathbf{B} + s) - s = \log \Pi (s) + c.\]
Let $s=0$,we obtain $c = \mathbf{B}\log \mathbf{B}$ .
\end{proof}
\begin{remark}
We can obtain a very short proof of Stirling formula by Lemma 6.4.
\begin{proof}
Using Lemma 6.4,we have
\[\log \Pi (s) = (\mathbf{B} + s)\log (1 + \frac{\mathbf{B}}{s}) + (\mathbf{B} + s)\log s - s - \mathbf{B}\log \mathbf{B}\]
\[ = (\mathbf{B} + s)\sum\limits_{n = 1}^\infty  {{{( - 1)}^{n - 1}}\frac{{{\mathbf{B}^n}}}{{n{s^n}}} + (\frac{1}{2} + s)\log s}  - s - \frac{{1 - \log 2\pi }}{2}.\]
\end{proof}
\end{remark}
\begin{remark}
We can obtain another proof of Proposition 5.1 by Lemma 6.4.
\begin{proof}
Set $s \to \mathbf{B}'/s$ in Lemma 6.4,where $\mathbf{B}'$ is another Bernoulli operator,we have
\[\log \Pi (\mathbf{B}'/s) = (\mathbf{B} + \mathbf{B}'/s)\log (\mathbf{B} + \mathbf{B}'/s) - \mathbf{B}\log \mathbf{B} - \mathbf{B}'/s\]
\[ = (\mathbf{B} + \mathbf{B}'/s)\log (\mathbf{B}s + \mathbf{B}') - (\mathbf{B} + \mathbf{B}'/s)\log s - \mathbf{B}\log \mathbf{B} - \mathbf{B}'/s.\]
Therefore
 \[s\log \Pi (\mathbf{B}'/s) = (\mathbf{B}s + \mathbf{B}')\log (\mathbf{B}s + \mathbf{B}') - (\mathbf{B}s + \mathbf{B}')\log s - \mathbf{B}\log \mathbf{B} \cdot s - \mathbf{B}'.\]
On the other hand,set $s \to \mathbf{B}'s$ in Lemma 6.4,where $\mathbf{B}'$ is another Bernoulli operator,we have
\[\log \Pi (\mathbf{B}'s) = (\mathbf{B} + \mathbf{B}'s)\log (\mathbf{B} + \mathbf{B}'s) - \mathbf{B}\log \mathbf{B} - \mathbf{B}'s.\]
Since \[(\mathbf{B}s + \mathbf{B}')\log (\mathbf{B}s + \mathbf{B}') = (\mathbf{B} + \mathbf{B}'s)\log (\mathbf{B} + \mathbf{B}'s),\] we have
\[s\log \Pi (\mathbf{B}'/s) = (\mathbf{B} + \mathbf{B}'s)\log (\mathbf{B} + \mathbf{B}'s) - \frac{{1 + s}}{2}\log s - \frac{{1 - \log 2\pi }}{2}s - \frac{1}{2}\]
\[ = \log \Pi (\mathbf{B}'s) + \frac{{1 - \log 2\pi }}{2} + \frac{s}{2} - \frac{{1 + s}}{2}\log s - \frac{{1 - \log 2\pi }}{2}s - \frac{1}{2}\]
\[ = \log \Pi (\mathbf{B}'s) + \frac{{\log 2\pi }}{2}s - \frac{{\log 2\pi }}{2} - \frac{{1 + s}}{2}\log s.\]
\end{proof}
\end{remark}

\begin{lemma}
\[\log \Pi (s - \mathbf{B}) = s\frac{{\Pi '(s)}}{{\Pi (s)}} - \frac{{1 - \log 2\pi }}{2} - s.\]
\end{lemma}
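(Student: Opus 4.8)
The plan is to obtain this lemma as an (essentially algebraic) consequence of Lemma 6.4, Lemma 6.2, and the value $B\log B=\tfrac{1-\log 2\pi}{2}$ recorded in (4.6); the only genuinely analytic input is the Euler--Maclaurin (Taylor) expansion of a Bernoulli operator function from the Introduction, used exactly as in the derivations of (4.8) and (5.2).

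First I would split the operator off the right-hand side of Lemma 6.4. As functions of $x$ one has $(x+s)\log(x+s)=x\log(x+s)+s\log(x+s)$, so by linearity of the Bernoulli operator Lemma 6.4 reads
\[
\log\Pi(s)=B\log(B+s)+s\log(B+s)-B\log B-s,
\]
where $B\log(B+s)$ denotes the Bernoulli operator applied to $x\mapsto x\log(x+s)$ and, by Lemma 6.2, $s\log(B+s)=s\,\Pi'(s)/\Pi(s)$. Comparing with the asserted formula, the proof reduces to the single identity
\[
\log\Pi(s)-\log\Pi(s-B)=B\log(B+s),
\]
for then $B\log B=\tfrac{1-\log 2\pi}{2}$ gives $\log\Pi(s-B)=s\,\Pi'(s)/\Pi(s)-\tfrac{1-\log 2\pi}{2}-s$ at once.

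Second, I would prove this identity by expanding the left side via the Bernoulli operator Taylor expansion, $\log\Pi(s-B)=\sum_{n\ge 0}\tfrac1{n!}(\log\Pi)^{(n)}(s)(-B)^{n}$. By Lemma 6.2, $(\log\Pi)'(s)=\log(B+s)$, and differentiating that relation repeatedly gives $(\log\Pi)^{(n)}(s)=(-1)^{n}(n-2)!\,(B+s)^{1-n}$ for $n\ge 2$; substituting, and using that the Bernoulli numbers of odd index $\ge 3$ vanish, collapses the tail to the Binet-type series $\sum_{k\ge 1}\frac{B_{2k}}{2k(2k-1)}(B+s)^{1-2k}$, which by Lemma 6.4 (equivalently Stirling's formula (4.8)) equals $\log\Pi(s-1)-(s-\tfrac12)\log s+s-\tfrac12\log 2\pi$. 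The functional relation $\Pi(s)=s\Pi(s-1)$ together with (4.6) then turn the whole expression into $\log\Pi(s)-B\log(B+s)$, as needed. An alternative in the spirit of the paper's handling of $\log\Pi(B)$ and $\log\Pi(B/s)$ is to substitute $z=s-B$ directly into Stirling's formula and simplify the resulting operator expressions using Lemma 6.2, (4.6), and $(1-B)^{n}=B^{n}$.

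The step I expect to be the main obstacle is the second one. Because $\log\Pi(s-B)$ carries a Bernoulli operator nested inside $\Pi$, one must treat the ``outer'' operator being Taylor-expanded and the operators introduced when Lemmas 6.2 and 6.4 are invoked as formally independent; otherwise the identity degenerates to the tautology $\log\Pi(s+B)=\log\Pi(s-B)+\Pi'(s)/\Pi(s)$ (which is just (4.1) applied to $\log\Pi$) and no new information is extracted. Pinning down the additive constant $-\tfrac{1-\log 2\pi}{2}$ (equivalently, the term $-s$) is exactly the point where (4.6) and the careful distinction between primitive equivalence and Bernoulli operator equivalence have to be used.
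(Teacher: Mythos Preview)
Your reduction in the first paragraph is exactly what the paper does: from Lemma~6.4 you split $(B+s)\log(B+s)=B\log(B+s)+s\log(B+s)$, invoke Lemma~6.2 on the second term, and are left with the single identity
\[
\log\Pi(s)-\log\Pi(s-B)=B\log(B+s),
\]
after which (4.6) finishes the argument. So the architecture of your proof coincides with the paper's.

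Where you diverge is in the proof of this key identity. You propose to Taylor-expand $\log\Pi(s-B)$, feed in $(\log\Pi)^{(n)}(s)=(-1)^{n}(n-2)!\,(B'+s)^{1-n}$ via Lemma~6.2, and then recognise the resulting Binet-type tail through Stirling's formula. That is a workable but heavy route, and as you yourself flag, it forces you to keep two Bernoulli operators ``independent'' and to track the constant term through several identifications. The paper instead obtains the identity in two lines from the recursion $\Pi(s)=s\Pi(s-1)$ together with Lemma~4.2: multiplying $\log\Pi(B+s)=\log(B+s)+\log\Pi(B+s-1)$ by $B$, using $(1-B)^{n}=B^{n}$ to turn $B\log\Pi(B+s-1)$ into $(1-B)\log\Pi(s-B)$, and then applying (4.2) in the form $B\log\Pi(B+s)=-B\log\Pi(-B+s)+\log\Pi(s)$ immediately yields $\log\Pi(s)=B\log(B+s)+\log\Pi(s-B)$. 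No Stirling expansion, no second operator, no delicate constant-chasing. Your approach trades this short structural argument for an explicit series computation; it buys nothing extra here, though the expansion you outline is the natural way to \emph{discover} the constant $\tfrac{1-\log2\pi}{2}$ if one did not already have (4.6).
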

\begin{proof}
By $\Pi (s) = s\Pi (s - 1)$ ,we have
\[\mathbf{B}\log \Pi (\mathbf{B} + s) = \mathbf{B}\log (\mathbf{B} + s) + \mathbf{B}\log \Pi (\mathbf{B} + s - 1)\]
\begin{equation}
= \mathbf{B}\log (\mathbf{B} + s) + (1 - \mathbf{B})\log \Pi ( - \mathbf{B} + s).
\end{equation}
Using (4.2),we have
\begin{equation}
\mathbf{B}\log \Pi (\mathbf{B} + s) =  - \mathbf{B}\log \Pi ( - \mathbf{B} + s) + \log \Pi (s).
\end{equation}
Combining(6.4)and(6.5),we obtain
\[\log \Pi (s) = \mathbf{B}\log (\mathbf{B} + s) + \log \Pi (s - \mathbf{B}).\]
By Lemma 6.2 and Lemma 6.4,we have
\[\log \Pi (s) = \log \Pi (s) + \mathbf{B}\log \mathbf{B} + s - s\log (\mathbf{B} + s) + \log \Pi (s - \mathbf{B})\]
\[ = \log \Pi (s) + \frac{{1 - \log 2\pi }}{2} + s - s\frac{{\Pi '(s)}}{{\Pi (s)}} + \log \Pi (s - \mathbf{B}).\]
\end{proof}

\begin{lemma}
\[\log \Pi (\frac{{\mathbf{B} + s}}{2}) = \frac{{\Pi '(s/2)}}{{4\Pi (s/2)}} - \frac{s}{2}\log 2 + \frac{s}{2}\frac{{\Pi '(s)}}{{\Pi (s)}} - \frac{s}{2} + \frac{{\Pi '(s)}}{{2\Pi (s)}} - \frac{1}{4} + \frac{{\log \pi }}{2}.\]
\end{lemma}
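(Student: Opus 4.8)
The plan is to obtain Lemma 6.8 from the Legendre duplication formula for the $\Pi$-function, which in the shape convenient here reads
\[
\Pi(2z) = \frac{2^{2z}}{\sqrt{\pi}}\,\Pi(z)\,\Pi\!\left(z-\frac{1}{2}\right).
\]
Substituting $z = \frac{B+s}{2}$, so that $2z = B+s$, and taking logarithms gives
\[
\log\Pi(B+s) = (B+s)\log 2 - \frac{1}{2}\log\pi + \log\Pi\!\left(\frac{B+s}{2}\right) + \log\Pi\!\left(\frac{B+s-1}{2}\right).
\]
So the only work is (i) to relate $\log\Pi(\frac{B+s-1}{2})$ back to $\log\Pi(\frac{B+s}{2})$, and (ii) to evaluate the elementary term $(B+s)\log 2$ and the term $\log\Pi(B+s)$ in closed form.

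For (i), write $\frac{B+s-1}{2} = \frac{s}{2} - \frac{1-B}{2}$. Since $(1-B)^n = B^n$ for every $n\ge 0$, also $\bigl(\frac{1-B}{2}\bigr)^n = \bigl(\frac{B}{2}\bigr)^n$, so the Bernoulli operator function $\Pi(\frac{s}{2}-\frac{1-B}{2})$ is Bernoulli operator equivalent to $\Pi(\frac{s}{2}-\frac{B}{2}) = \Pi(\frac{s-B}{2})$. Applying formula (4.1) of Lemma 4.2 with $f = \log\Pi$, $a = \frac{s}{2}$ and $z = \frac{1}{2}$ then gives
\[
\log\Pi\!\left(\frac{B+s}{2}\right) - \log\Pi\!\left(\frac{s-B}{2}\right) = \frac{1}{2}\,\frac{\Pi'(s/2)}{\Pi(s/2)}.
\]
Inserting these two facts into the logarithmic duplication identity displayed above eliminates both $\log\Pi(\frac{B+s-1}{2})$ and $\log\Pi(\frac{s-B}{2})$ and leaves
\[
2\log\Pi\!\left(\frac{B+s}{2}\right) = \log\Pi(B+s) - (B+s)\log 2 + \frac{1}{2}\log\pi + \frac{1}{2}\,\frac{\Pi'(s/2)}{\Pi(s/2)}.
\]

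For (ii), the linear term is immediate: $(B+s)\log 2 = s\log 2 + B\log 2 = s\log 2 + \frac{1}{2}\log 2$, since $B = B_1 = \frac{1}{2}$. For $\log\Pi(B+s)$, I use $\Pi(B+s) = (B+s)\Pi(B+s-1)$ together with the Bernoulli operator equivalence $\Pi(B+s-1) = \Pi(s-B)$ (the same $1-B\leftrightarrow B$ argument), so that $\log\Pi(B+s) = \log(B+s) + \log\Pi(s-B)$; Lemma 6.2 gives $\log(B+s) = \frac{\Pi'(s)}{\Pi(s)}$ and Lemma 6.6 gives $\log\Pi(s-B) = s\frac{\Pi'(s)}{\Pi(s)} - \frac{1-\log 2\pi}{2} - s$, whence $\log\Pi(B+s) = (s+1)\frac{\Pi'(s)}{\Pi(s)} - \frac{1-\log 2\pi}{2} - s$. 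Substituting these into the previous display, dividing by $2$, splitting $\frac{s+1}{2}\frac{\Pi'(s)}{\Pi(s)} = \frac{s}{2}\frac{\Pi'(s)}{\Pi(s)} + \frac{\Pi'(s)}{2\Pi(s)}$, and collecting the constants via $\frac{1}{4}\bigl(-1+\log 2\pi - \log 2 + \log\pi\bigr) = \frac{\log\pi}{2} - \frac{1}{4}$ reproduces exactly the asserted formula.

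I expect the only real obstacle to be care with the operator bookkeeping rather than anything substantial. One must genuinely treat $\Pi(\frac{B+s-1}{2})$ as a Bernoulli operator function and justify the half-operator replacement $\frac{1-B}{2}\leftrightarrow\frac{B}{2}$; the Remark after Theorem 2.1 shows that such a replacement is illegitimate at Bernoulli operator poles, so one should check that the argument $\frac{B+s-1}{2}$ avoids the non-positive integers for the relevant $s$ (for instance by first proving the identity on a suitable half-plane and extending by analytic continuation). The only other point needing attention is applying (4.1) consistently with the non-unit choice $z = \frac{1}{2}$; once those steps are licensed, the remainder is the constant-chasing sketched above.
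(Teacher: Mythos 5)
Your proposal is correct and follows essentially the same route as the paper: the Legendre relation applied at $B+s$, the replacement $\log\Pi(\frac{s-B}{2}) = \log\Pi(\frac{s+B}{2}) - \frac{1}{2}\frac{\Pi'(s/2)}{\Pi(s/2)}$ from (4.1), and then $\log\Pi(B+s) = \frac{\Pi'(s)}{\Pi(s)} + \log\Pi(s-B)$ together with the evaluation of $\log\Pi(s-B)$ (which is Lemma 6.7 in the paper, not 6.6), followed by the same constant bookkeeping. Your detour through $\Pi(B+s)=(B+s)\Pi(B+s-1)$ and Lemma 6.2 is just the paper's own derivation of that relation unwound, so there is no substantive difference.
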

\begin{proof}
By \[\Pi (s) = {2^s}\Pi (\frac{s}{2})\Pi (\frac{{s - 1}}{2}){\pi ^{ - 1/2}},\]
we have
\[\log \Pi (\mathbf{B} + s) = (\mathbf{B} + s)\log 2 + \log \Pi (\frac{{\mathbf{B} + s}}{2}) + \log \Pi (\frac{{s - \mathbf{B}}}{2}) - \frac{1}{2}\log \pi \]
\[ = (\frac{1}{2} + s)\log 2 + \log \Pi (\frac{{\mathbf{B} + s}}{2}) + \log \Pi (\frac{{s + \mathbf{B}}}{2}) - \frac{1}{2}\frac{{\Pi '(s/2)}}{{\Pi (s/2)}} - \frac{1}{2}\log \pi. \]
Therefore
\[2\log \Pi (\frac{{\mathbf{B} + s}}{2}) = \log \Pi (\mathbf{B} + s) - (\frac{1}{2} + s)\log 2 + \frac{1}{2}\frac{{\Pi '(s/2)}}{{\Pi (s/2)}} + \frac{1}{2}\log \pi \]
\[ = \log \Pi ( - \mathbf{B} + s) + \frac{{\Pi '(s)}}{{\Pi (s)}} - (\frac{1}{2} + s)\log 2 + \frac{1}{2}\frac{{\Pi '(s/2)}}{{\Pi (s/2)}} + \frac{1}{2}\log \pi .\]
By Lemma 6.7,we obtain
\[2\log \Pi (\frac{{\mathbf{B} + s}}{2}) = s\frac{{\Pi '(s)}}{{\Pi (s)}} - \frac{{1 - \log 2\pi }}{2} - s + \frac{{\Pi '(s)}}{{\Pi (s)}} - (\frac{1}{2} + s)\log 2 + \frac{1}{2}\frac{{\Pi '(s/2)}}{{\Pi (s/2)}} + \frac{1}{2}\log \pi \]
\[ = (s + 1)\frac{{\Pi '(s)}}{{\Pi (s)}} - \frac{1}{2} + \log \pi  - s - s\log 2 + \frac{1}{2}\frac{{\Pi '(s/2)}}{{\Pi (s/2)}}.\]
\end{proof}

\begin{lemma}
\begin{equation}
\log \sin \frac{{\pi (\mathbf{B} - s)}}{2} =  - \log 2 + \frac{{\pi s}}{2}\frac{{\cos \pi s}}{{\sin \pi s}} + \frac{\pi }{4}(2\frac{{\cos \pi s}}{{\sin \pi s}} - \frac{{\cos (\pi s/2)}}{{\sin (\pi s/2)}}).
\end{equation}
\end{lemma}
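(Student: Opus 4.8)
The plan is to reduce the claim, by a double‑angle identity and the substitution $B\mapsto 1-B$, to the evaluation of $\log\sin\pi(B-s)$, and then to compute that quantity from the reflection formula for $\Pi$ together with Lemmas 6.2 and 6.7.

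First I would record two auxiliary identities obtained by applying $(1-B)^n=B^n$ to the functions $\log\sin\frac{\pi(\,\cdot\,-s)}{2}$ and $\log\cos\frac{\pi(\,\cdot\,-s)}{2}$ (re‑expanded about the point $1$): since $\sin\bigl(\frac\pi2\pm x\bigr)=\cos x$ and $\cos\bigl(\frac\pi2+x\bigr)=-\sin x$ — and $\sin$ is odd, so the two sign changes cancel — one obtains
\[\log\sin\tfrac{\pi(B-s)}{2}=\log\cos\tfrac{\pi(B+s)}{2},\qquad \log\cos\tfrac{\pi(B-s)}{2}=\log\sin\tfrac{\pi(B+s)}{2}.\]
It is essential here to route the passage $B\mapsto 1-B$ through these $\cos\leftrightarrow\sin$ shift relations (which introduce no spurious $\log(-1)$) rather than through $\sin(\pi\pm x)=\pm\sin x$, which would. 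Next, applying equation (4.1) of Lemma 4.2 to $c(w)=\log\cos\frac{\pi w}{2}$ with base point $s$ gives $\log\cos\frac{\pi(B+s)}{2}=\log\cos\frac{\pi(s-B)}{2}+c'(s)$ with $c'(s)=-\frac\pi2\tan\frac{\pi s}{2}$; since $\cos$ is even, $\log\cos\frac{\pi(s-B)}{2}=\log\cos\frac{\pi(B-s)}{2}$, so $\log\cos\frac{\pi(B+s)}{2}=\log\cos\frac{\pi(B-s)}{2}-\frac\pi2\tan\frac{\pi s}{2}$. Chaining the last three displays gives $\log\sin\frac{\pi(B-s)}{2}-\log\sin\frac{\pi(B+s)}{2}=-\frac\pi2\tan\frac{\pi s}{2}$. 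On the other hand, taking logarithms of the primitive identity $\sin\pi(B-s)=2\sin\frac{\pi(B-s)}{2}\cos\frac{\pi(B-s)}{2}$ and using the second auxiliary identity yields $\log\sin\frac{\pi(B-s)}{2}+\log\sin\frac{\pi(B+s)}{2}=\log\sin\pi(B-s)-\log 2$. Adding the two relations,
\[\log\sin\tfrac{\pi(B-s)}{2}=\tfrac12\log\sin\pi(B-s)-\tfrac12\log 2-\tfrac\pi4\tan\tfrac{\pi s}{2}.\]

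It remains to compute $\log\sin\pi(B-s)$. From $\frac{\pi z}{\Pi(z)\Pi(-z)}=\sin\pi z$ at $z=B-s$,
\[\log\sin\pi(B-s)=\log\pi(B-s)-\log\Pi(B-s)-\log\Pi(s-B).\]
I would evaluate $\log\pi(B-s)=\log\pi+\log(B-s)=\log\pi+\frac{\Pi'(-s)}{\Pi(-s)}$ by Lemma 6.2; $\log\Pi(s-B)$ directly by Lemma 6.7; and $\log\Pi(B-s)$ by first writing $\log\Pi(B-s)=\log\Pi(1-B-s)$ (the $B\mapsto1-B$ substitution on $\log\Pi(\,\cdot\,-s)$), then $\Pi(1-B-s)=(1-B-s)\Pi(-B-s)$ from $\Pi(z)=z\Pi(z-1)$, and applying Lemma 6.2 to $\log(1-B-s)$ and Lemma 6.7 (with $s\mapsto-s$) to $\log\Pi(-B-s)$. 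Substituting, the constants assemble to $1-\log 2$, the $s$‑terms cancel, and one is left with $\log\sin\pi(B-s)=1-\log 2+s\bigl(\frac{\Pi'(-s)}{\Pi(-s)}-\frac{\Pi'(s)}{\Pi(s)}\bigr)$. The bracketed combination is identified by differentiating the logarithm of $\frac{\pi s}{\Pi(s)\Pi(-s)}=\sin\pi s$, which gives $\frac{\Pi'(-s)}{\Pi(-s)}-\frac{\Pi'(s)}{\Pi(s)}=\pi\cot\pi s-\frac1s$; hence $\log\sin\pi(B-s)=\pi s\cot\pi s-\log 2$. Feeding this into the displayed relation and using the elementary identity $2\cot\pi s-\cot\frac{\pi s}{2}=-\tan\frac{\pi s}{2}$ (so that $-\frac\pi4\tan\frac{\pi s}{2}=\frac\pi4\bigl(2\cot\pi s-\cot\frac{\pi s}{2}\bigr)$) produces exactly the asserted formula.

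The steps that need care are the usual primitive‑versus‑Bernoulli‑operator bookkeeping: a logarithm may be taken only of an equality valid at the level of the $B$‑expansion, so each $B\mapsto 1-B$ move must act on a function (such as $\log\cos\frac{\pi(\,\cdot\,-s)}{2}$) rather than on an already‑reduced numerical quantity, and the return to $\sin$/$\cos$ must use the sign‑free shift identities noted above. I expect the main computational obstacle to be the collapse of the four digamma‑type terms $\Pi'(-s)/\Pi(-s)$ and $\Pi'(s)/\Pi(s)$ arising from the three pieces of $\log\sin\pi(B-s)$ down to the single combination handled by the reflection formula; the remaining work is routine algebra and the trigonometric simplification.
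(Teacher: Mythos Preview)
Your proof is correct and takes a genuinely different route from the paper's. The paper applies the reflection formula $\sin\pi z=\tfrac{\pi z}{\Pi(z)\Pi(-z)}$ directly at the half-angle $z=\tfrac{B-s}{2}$, which produces the terms $\log\Pi\bigl(\tfrac{B\pm s}{2}\bigr)$; these are then evaluated via Lemma~6.8 (the formula for $\log\Pi\bigl(\tfrac{B+s}{2}\bigr)$ in terms of digamma values), and the resulting four digamma terms $\Pi'(\pm s)/\Pi(\pm s)$, $\Pi'(\pm s/2)/\Pi(\pm s/2)$ are simplified using the reflection identities (6.8)--(6.9). You instead first reduce, by the double-angle identity together with the $B\mapsto 1-B$ symmetries, to the full-angle quantity $\log\sin\pi(B-s)$, and only then invoke the reflection formula at $z=B-s$; this leads to $\log\Pi(B-s)$ and $\log\Pi(s-B)$, which you handle with Lemmas~6.2 and~6.7 alone, bypassing Lemma~6.8 entirely. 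Your path trades the somewhat heavier input of Lemma~6.8 for a bit more care in the trigonometric and $B\mapsto 1-B$ bookkeeping; the paper's path is more mechanical once Lemma~6.8 is in hand. Both collapse the digamma terms via the same reflection identity (your (6.9)), and your closing identity $2\cot\pi s-\cot\tfrac{\pi s}{2}=-\tan\tfrac{\pi s}{2}$ is exactly the half-angle relation that the paper's use of both (6.8) and (6.9) is encoding.
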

\begin{proof}
Since \[\frac{{\pi s}}{{\Pi (s)\Pi ( - s)}} = \sin \pi s,\]
we have
\[\log \sin \frac{{\pi (\mathbf{B} - s)}}{2} = \log \frac{{\pi (\mathbf{B} - s)}}{2} - \log \Pi (\frac{{\mathbf{B} - s}}{2}) - \log \Pi (\frac{{ - \mathbf{B} + s}}{2})\]
\[ = \log \frac{\pi }{2} + \frac{{\Pi '( - s)}}{{\Pi ( - s)}} - \log \Pi (\frac{{\mathbf{B} - s}}{2}) - \log \Pi (\frac{{\mathbf{B} + s}}{2}) + \frac{1}{2}\frac{{\Pi '(s/2)}}{{\Pi (s/2)}}.\]
By Lemma 6.8,we obtain
\[\log \sin \frac{{\pi (\mathbf{B} - s)}}{2} = \log \frac{\pi }{2} + \frac{{\Pi '( - s)}}{{\Pi ( - s)}} - \frac{{\Pi '( - s/2)}}{{4\Pi ( - s/2)}} - \frac{s}{2}\log 2 + \frac{s}{2}\frac{{\Pi '( - s)}}{{\Pi ( - s)}} - \frac{s}{2} - \frac{1}{2}\frac{{\Pi '( - s)}}{{\Pi ( - s)}}\]
\[ + \frac{1}{4} - \frac{1}{2}\log \pi  - \frac{1}{4}\frac{{\Pi '(s/2)}}{{\Pi (s/2)}} + \frac{s}{2}\log 2 - \frac{s}{2}\frac{{\Pi '(s)}}{{\Pi (s)}} + \frac{s}{2} - \frac{1}{2}\frac{{\Pi '(s)}}{{\Pi (s)}} + \frac{1}{4} - \frac{{\log \pi }}{2} + \frac{1}{2}\frac{{\Pi '(s/2)}}{{\Pi (s/2)}}\]
\[ = \log \frac{\pi }{2} + \frac{{\Pi '( - s)}}{{\Pi ( - s)}} - \frac{1}{4}[\frac{{\Pi '(s/2)}}{{\Pi (s/2)}} + \frac{{\Pi '( - s/2)}}{{\Pi ( - s/2)}}] + \frac{s}{2}[\frac{{\Pi '( - s)}}{{\Pi ( - s)}} - \frac{{\Pi '(s)}}{{\Pi (s)}}]\]
\begin{equation}
 - \frac{1}{2}[\frac{{\Pi '(s)}}{{\Pi (s)}} + \frac{{\Pi '( - s)}}{{\Pi ( - s)}}] + \frac{1}{2} - \log \pi  + \frac{1}{2}\frac{{\Pi '(s/2)}}{{\Pi (s/2)}}.
\end{equation}
Moreover
\begin{equation}
\frac{{\Pi '(s/2)}}{{2\Pi (s/2)}} - \frac{{\Pi '( - s/2)}}{{2\Pi ( - s/2)}} = \frac{1}{s} - \frac{\pi }{2}\frac{{\cos (\pi s/2)}}{{\sin (\pi s/2)}},
\end{equation}
\begin{equation}
\frac{{\Pi '(s)}}{{\Pi (s)}} - \frac{{\Pi '( - s)}}{{\Pi ( - s)}} = \frac{1}{s} - \pi \frac{{\cos \pi s}}{{\sin \pi s}}.
\end{equation}
Using(6.7),(6.8)and(6.9),we obtain(6.6).
\end{proof}
\par We now prove the Theorem 6.1.
\begin{proof}
Taking the logarithms on both sides of functional equation of Riemann Zeta, let $s \to \mathbf{B} - s$,we have
\[\log \zeta (\mathbf{B} - s) = \log \Pi (s - \mathbf{B}) + \log 2\pi  \cdot ( - s + \mathbf{B} - 1) + \log 2 + \log \sin \frac{{\pi (\mathbf{B} - s)}}{2} + \log \zeta (1 - \mathbf{B} + s).\]
By lemma 6.7 and lemma 6.9,we have
\[\log \zeta (\mathbf{B} - s) = s\frac{{\Pi '(s)}}{{\Pi (s)}} - \frac{{1 - \log 2\pi }}{2} - s - (\frac{1}{2} + s)\log 2\pi  + \log 2 - \log 2 + \frac{{\pi s}}{2}\frac{{\cos \pi s}}{{\sin \pi s}}\]
\[ + \frac{\pi }{4}(2\frac{{\cos \pi s}}{{\sin \pi s}} - \frac{{\cos (\pi s/2)}}{{\sin (\pi s/2)}}) + \log \zeta (\mathbf{B} + s)\]
\[ = s\frac{{\Pi '(s)}}{{\Pi (s)}} - \frac{1}{2} - s - s\log 2\pi  + \frac{{\pi s}}{2}\frac{{\cos \pi s}}{{\sin \pi s}} + \frac{\pi }{4}(2\frac{{\cos \pi s}}{{\sin \pi s}} - \frac{{\cos (\pi s/2)}}{{\sin (\pi s/2)}}) + \log \zeta (\mathbf{B} + s).\]
Because $\log \zeta (\mathbf{B} + s)$ is analytic at ${\mathop{\rm Re}\nolimits} {\kern 1pt} s > 0 $ ,using the functional equation (6.1),it is clear that the function $\log \zeta (\mathbf{B} + s)$ has poles at the negative integers (i.e. at $s = -1, -2, -3, ...$).Because \[\log \zeta (\mathbf{B}) = \log [\zeta (\mathbf{B})(\mathbf{B} - 1)] - \log (\mathbf{B} - 1),\]by \eqref{1.1},we have \[\log \zeta (\mathbf{B}) =  - \mathop {\lim }\limits_{s \to 1} \frac{{\zeta '(s)(s - 1) + \zeta (s)}}{{\zeta (s)(s - 1)}} - \sum\limits_{n = 2}^\infty  {[\frac{{\zeta '(n)}}{{\zeta (n)}}}  + \frac{1}{{n - 1}}] + \mathop {\lim }\limits_{s \to 1} \frac{1}{{s - 1}} + \sum\limits_{n = 2}^\infty  {\frac{1}{{n - 1}}} \]
\[ =  - \gamma  - \sum\limits_{n = 2}^\infty  {\frac{{\zeta '(n)}}{{\zeta (n)}}}  + \mathop {\lim }\limits_{s \to 1} \frac{1}{{s - 1}},\]
therefore the function $\log \zeta (\mathbf{B} + s)$ has a pole at $s=0$.By the way,we give \[\begin{array}{l}
 \log [ - \zeta (\mathbf{B})] = \log [\zeta (\mathbf{B})(\mathbf{B} - 1)] - \log (1 - \mathbf{B}) \\
  =  - \mathop {\lim }\limits_{s \to 1} \frac{{\zeta '(s)(s - 1) + \zeta (s)}}{{\zeta (s)(s - 1)}} - \sum\limits_{n = 2}^\infty  {[\frac{{\zeta '(n)}}{{\zeta (n)}} + \frac{1}{{n - 1}}]}  - \log \mathbf{B} \\
  =  - \gamma  - \sum\limits_{n = 2}^\infty  {\frac{{\zeta '(n)}}{{\zeta (n)}}}  + \log \mathbf{B} - \log \mathbf{B} \\
  =  - \gamma  - \sum\limits_{n = 2}^\infty  {\frac{{\zeta '(n)}}{{\zeta (n)}}} , \\
 \end{array}\]
therefore
\[\begin{array}{l}
 \log [ - \zeta (\mathbf{B})] =  - \gamma  - \sum\limits_{n = 2}^\infty  {\frac{{\zeta '(n)}}{{\zeta (n)}}}  =  - \gamma  + \log \zeta (1 + \mathbf{B}) \\
  =  - \gamma  + \sum\limits_{n = 2}^\infty  {\Lambda (n)} {(\log n)^{ - 1}}{n^{ - 1 - \mathbf{B}}} =  - \gamma  + \sum\limits_{n = 2}^\infty  {\Lambda (n)} {(\log n)^{ - 1}}{n^{ - 1}}\frac{{\log n}}{{n - 1}} \\
  =  - \gamma  + \sum\limits_{n = 2}^\infty  {\frac{{\Lambda (n)}}{{n(n - 1)}}}  \approx \log [ - \zeta (0)] + \frac{{\zeta '(0)}}{{\zeta (0)}}{B_1} =  - \log 2 + \frac{{\log 2\pi }}{2} \approx 0.226 , \\
 \end{array}\]
 where $\Lambda (n)$ is von Mangoldt function.By calculating, we can find the value of $\log [ - \zeta (\mathbf{B})]$ is close to the value of $\log [ - \zeta (1/2)]$.
\end{proof}
\begin{remark}
Set $s = 1$ in $(3.2)$,we have
\[\begin{array}{l}
 \log \zeta (\mathbf{B} + 1) =  - \log \Pi (\frac{{\mathbf{B} + 1}}{2}) + \frac{{\mathbf{B} + 1}}{2}\log \pi - \log (1 + \mathbf{B} - 1) + \log \xi (\mathbf{B}) + \sum\limits_\rho  {\log (1 - \frac{{1}}{{\rho  - \mathbf{B}}})}  \\
 \end{array}.\]
 On the other hand,\[ \log \xi (\mathbf{B})=\log \Pi (\frac{\mathbf{B}}{2}) + \log (1 - \mathbf{B}) - \frac{\mathbf{B}}{2}\log \pi  + \log [ - \zeta (\mathbf{B})].\]
 Therefore
 \[\begin{array}{l}
 \log \zeta (\mathbf{B} + 1) =  - \log \Pi (\frac{{\mathbf{B} + 1}}{2}) + \frac{{\mathbf{B} + 1}}{2}\log \pi- \log \mathbf{B} + \log \Pi (\frac{\mathbf{B}}{2}) \\
 + \log (1 - \mathbf{B}) - \frac{\mathbf{B}}{2}\log \pi  + \log [ - \zeta (\mathbf{B})] + \sum\limits_\rho  {\log (1 - \frac{{1}}{{\rho  - \mathbf{B}}})}  \\
 \end{array}\]
  \[\begin{array}{l}
  =  - \log \Pi (\frac{{\mathbf{B} + 1}}{2}) + \frac{{1}}{2}\log \pi  + \log \Pi (\frac{\mathbf{B}}{2}) + \log [ - \zeta (\mathbf{B})] + \sum\limits_\rho  {\log \frac{{\mathbf{B} + 1 - \rho }}{{\mathbf{B} - \rho }}}  \\
 \end{array}\]
 \[\begin{array}{l}
  =  - \log \Pi (\frac{{\mathbf{B} + 1}}{2}) + \frac{{1}}{2}\log \pi  + \log \Pi (\frac{\mathbf{B}}{2})  - \gamma  + \log \zeta (1 + \mathbf{B})  + \sum\limits_\rho  {\log \frac{{\mathbf{B} + 1 - \rho }}{{\mathbf{B} - \rho }}}  \\
 \end{array}.\]
 By Lemma 6.8,we have
\[\log \Pi (\frac{{\mathbf{B} + 1}}{2}) = \frac{{\Pi '(1/2)}}{{4\Pi (1/2)}} - \frac{1}{2}\log 2 + \frac{{\Pi '(1)}}{{\Pi (1)}} - \frac{1}{2} - \frac{1}{4} + \frac{{\log \pi }}{2},\]
\[\log \Pi (\frac{\mathbf{B}}{2}) = \frac{{\Pi '(0)}}{{4\Pi (0)}} + \frac{{\Pi '(0)}}{{2\Pi (0)}} - \frac{1}{4} + \frac{{\log \pi }}{2}.\]
By Lemma 6.2,we have
\[\sum\limits_\rho  {\log \frac{{\mathbf{B} + 1 - \rho }}{{\mathbf{B} - \rho }}}  = \sum\limits_\rho  {\frac{{\Pi '(1 - \rho )}}{{\Pi (1 - \rho )}}}  - \frac{{\Pi '( - \rho )}}{{\Pi ( - \rho )}} =  \sum\limits_\rho  {\frac{1}{1 - \rho }} = \sum\limits_\rho  {\frac{1}{\rho }}.\]
 Therefore we deduce that
\[\begin{array}{l}
0 =  - \frac{{\Pi '(1/2)}}{{4\Pi (1/2)}} + \frac{1}{2}\log 2 - \frac{{\Pi '(1)}}{{\Pi (1)}} + \frac{1}{2} + \frac{1}{4} - \frac{{\log \pi }}{2} + \frac{{\log \pi }}{2}\\
 + \frac{{\Pi '(0)}}{{4\Pi (0)}} + \frac{{\Pi '(0)}}{{2\Pi (0)}} - \frac{1}{4} + \frac{{\log \pi }}{2} - \gamma  + \sum\limits_\rho  {\frac{1}{\rho }},
\end{array}\]
we arrive at
\[\begin{array}{l}
0 =  - \frac{1}{4}(2 - 2\log 2 - \gamma ) + \frac{{\log 2}}{2} - 1 + \gamma  + \frac{1}{2}\\
 - \frac{\gamma }{4} - \frac{\gamma }{2} + \frac{{\log \pi }}{2} - \gamma  + \sum\limits_\rho  {\frac{1}{\rho }}.
\end{array}\]
Hence, we deduce that
\[\sum\limits_\rho  {\frac{1}{\rho }}  = 1 + \frac{\gamma }{2} - \frac{{\log \pi }}{2} - \log 2.\]
\end{remark}

\begin{Corollary}
\[ - s\frac{{\zeta '(\mathbf{B})}}{{\zeta (\mathbf{B})}} = \mathbf{B}s\frac{{\Pi '(\mathbf{B}s)}}{{\Pi (\mathbf{B}s)}} - \frac{{1 + s + s\log 2\pi }}{2} + \frac{{\mathbf{B}\pi s}}{2}\frac{{\cos \pi \mathbf{B}s}}{{\sin \pi \mathbf{B}s}} - \frac{{{\pi ^2}}}{{16}}s.\]
\end{Corollary}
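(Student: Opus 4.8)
The plan is to deduce the identity by performing the substitution $s\to Bs$ in the functional equation (6.1) of Theorem 6.1,
\[
\log\zeta(B-s)-\log\zeta(B+s)=s\frac{\Pi'(s)}{\Pi(s)}-\frac12-s-s\log 2\pi+\frac\pi2 s\frac{\cos\pi s}{\sin\pi s}+\frac\pi4\Bigl(2\frac{\cos\pi s}{\sin\pi s}-\frac{\cos(\pi s/2)}{\sin(\pi s/2)}\Bigr),
\]
treating the $B$ that enters through the substitution as a further Bernoulli operator, independent of the one already present inside $\zeta(B\pm s)$ (I keep the letter $B$ only for brevity, since all Bernoulli operators yield the same evaluations and here the two never enter a common product). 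After the substitution the right-hand side is to be reduced to the stated form, while the left-hand side collapses by Lemma 4.2.

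For the left-hand side I would apply (4.1) of Lemma 4.2 to the function $w\mapsto\log\zeta(B+w)$ at $w=0$ with increment $-Bs$, which gives $\log\zeta(B-Bs)=\log\zeta(B+Bs)-s\,\zeta'(B)/\zeta(B)$, i.e.
\[
\log\zeta(B-Bs)-\log\zeta(B+Bs)=-\,s\,\frac{\zeta'(B)}{\zeta(B)},
\]
which is exactly the left member of the Corollary. The independence of the substitution operator from the operator inside $\zeta$ is essential here: it is what lets the odd powers $\geq 3$ of the substitution operator annihilate the higher Taylor terms and leave only the first-order term, exactly the mechanism behind (4.1).

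For the right-hand side, after $s\to Bs$ the block $-\frac12-s-s\log 2\pi$ becomes $-\frac12-Bs-Bs\log 2\pi=-\frac12-\frac s2-\frac s2\log 2\pi=-\frac{1+s+s\log 2\pi}{2}$ (in $-Bs$ and $-Bs\log 2\pi$ the operator multiplies only scalars, so $B=B_1=\frac12$); the terms $s\,\Pi'(s)/\Pi(s)$ and $\frac\pi2 s\,\cos\pi s/\sin\pi s$ pass to $Bs\,\Pi'(Bs)/\Pi(Bs)$ and $\frac{B\pi s}{2}\cos\pi Bs/\sin\pi Bs$, matching the Corollary verbatim. The only term requiring genuine work is the last one: substituting $z=\pi Bs$ and $z=\pi Bs/2$ into the Laurent expansion $\cot z=\frac1z-\frac z3-\frac{z^3}{45}-\cdots$, every power $(\pi Bs)^{2k-1}$ with $k\geq 2$ contributes $B^{2k-1}=B_{2k-1}=0$, the terms $-\frac{\pi Bs}{3}$ and $-\frac{\pi Bs}{6}$ give $-\frac{\pi s}{6}$ and $-\frac{\pi s}{12}$, and the pole parts $\frac1{\pi Bs}$ and $\frac2{\pi Bs}$ are evaluated through $B^{-1}=\zeta(2)=\pi^2/6$, so that $\cot(\pi Bs)=\frac{\pi}{6s}-\frac{\pi s}{6}$ and $\cot(\pi Bs/2)=\frac{\pi}{3s}-\frac{\pi s}{12}$. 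Hence $2\cot(\pi Bs)-\cot(\pi Bs/2)=-\frac{\pi s}{4}$ and the last term equals $-\frac{\pi^2}{16}s$, completing the identification with the Corollary.

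The step I expect to be the main obstacle is the rigorous justification of the substitution $s\to Bs$ inside a functional equation originally established for complex $s$: one has to argue, in line with the paper's analytic-continuation principle, that both sides remain legitimate Bernoulli operator series after the substitution — in particular near $s=0$, where $\log\zeta(B+s)$ is singular although the difference $\log\zeta(B-s)-\log\zeta(B+s)$ is regular — and one must keep the substitution operator strictly separate from the operator already inside $\zeta(B\pm s)$, since otherwise a forbidden self-product of a single Bernoulli operator would appear and Lemma 4.2 would break down. As a check, combining with Proposition 4.1, which gives $\zeta'(B)/\zeta(B)=\frac{1+\gamma+\log 2\pi}{2}+\frac{\pi^2}{16}$, the right-hand side does reduce to $-s\bigl(\frac{1+\gamma+\log 2\pi}{2}+\frac{\pi^2}{16}\bigr)$; equivalently, the auxiliary identity $Bs\,\Pi'(Bs)/\Pi(Bs)+\frac{B\pi s}{2}\cos\pi Bs/\sin\pi Bs=\frac12-\frac{\gamma s}{2}$ holds, which one can verify directly from the $\psi$-expansion of $\Pi'/\Pi$ and the values $B^{2k}=B_{2k}$, $B^{2k+1}=0$.
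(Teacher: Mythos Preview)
Your proposal is correct and follows essentially the same route as the paper: substitute $s\to B's$ (a second, independent Bernoulli operator) into the functional equation (6.1), collapse the left side via (4.1) of Lemma~4.2 to $-s\,\zeta'(B)/\zeta(B)$, and evaluate the cotangent combination by its Laurent expansion to obtain $-\frac{\pi^2}{16}s$. The paper's display (6.11) carries a sign slip (it should read $-s\,\zeta'(B)/\zeta(B)$, as you derive), and your added consistency check against Proposition~4.1 is exactly the verification the paper performs immediately after the Corollary.
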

\begin{proof}
Set $s \to \mathbf{B}'s$ ($\mathbf{B}'$  is also a Bernoulli operator) in (6.1),we have
\[\log \zeta (\mathbf{B} - \mathbf{B}'s) - \log \zeta (\mathbf{B} + \mathbf{B}'s) = \mathbf{B}'s\frac{{\Pi '(\mathbf{B}'s)}}{{\Pi (\mathbf{B}'s)}} - \frac{1}{2} - \mathbf{B}'s - \mathbf{B}'s\log 2\pi \]
\[ + \frac{{\pi \mathbf{B}'s}}{2}\frac{{\cos \pi \mathbf{B}'s}}{{\sin \pi \mathbf{B}'s}} + \frac{\pi }{4}(2\frac{{\cos \pi \mathbf{B}'s}}{{\sin \pi \mathbf{B}'s}} - \frac{{\cos (\pi \mathbf{B}'s/2)}}{{\sin (\pi \mathbf{B}'s/2)}})\]
\begin{equation}
= \mathbf{B}s\frac{{\Pi '(\mathbf{B}s)}}{{\Pi (\mathbf{B}s)}} - \frac{1}{2} - \mathbf{B}s - \mathbf{B}s\log 2\pi  + \frac{{\pi \mathbf{B}s}}{2}\frac{{\cos \pi \mathbf{B}s}}{{\sin \pi \mathbf{B}s}} + \frac{\pi }{4}(2\frac{{\cos \pi \mathbf{B}s}}{{\sin \pi \mathbf{B}s}} - \frac{{\cos (\pi \mathbf{B}s/2)}}{{\sin (\pi \mathbf{B}s/2)}}).
\end{equation}
Using(4.1),we have
\begin{equation}
\log \zeta (\mathbf{B} - \mathbf{B}'s) - \log \zeta (\mathbf{B} + \mathbf{B}'s) = \frac{{\zeta '(\mathbf{B})}}{{\zeta (\mathbf{B})}}s.
\end{equation}
Applying the Taylor expansion,we find that
\begin{equation}
\frac{\pi }{4}(2\frac{{\cos \pi \mathbf{B}s}}{{\sin \pi \mathbf{B}s}} - \frac{{\cos (\pi \mathbf{B}s/2)}}{{\sin (\pi \mathbf{B}s/2)}}) = \frac{\pi }{4}[2(\frac{1}{{\pi \mathbf{B}s}} - \frac{{{2^2}}}{2}{B_2}\pi \mathbf{B}s) - \frac{2}{{\pi \mathbf{B}s}} + \frac{{{2^2}}}{2}{B_2}\pi \frac{{\mathbf{B}s}}{2}] =  - \frac{{{\pi ^2}}}{{16}}s.
\end{equation}
Using(6.10),(6.11)and(6.12),we obtain Corollary 6.11.
\end{proof}
\par In particular,set $s=1$ in Corollary 6.11,combining (4.27) and (4.30),we obtain the following equation again
\[\frac{{\zeta '(\mathbf{B})}}{{\zeta (\mathbf{B})}} = \frac{{1 + \gamma  + \log 2\pi }}{2} + \frac{{{\pi ^2}}}{{16}}.\]

\vspace{0.3in}
\section{\textbf{On Riemann Hypothesis}}
If Riemann Hypothesis is true,then $\log \zeta (1/2 + s)$ is analytic at all points of ${\mathop{\rm Re}\nolimits} {\kern 1pt} s > 0$ except for $s=1/2$.On the other hand,the Bernoulli operator function $\log \zeta (\mathbf{B} + s)$ is an analogue of $\log \zeta (1/2 + s)$.Because $\log \zeta (\mathbf{B} + s)$  is analytic at ${\mathop{\rm Re}\nolimits} {\kern 1pt} s > 0 ,$ this seems to  believe that the Riemann hypothesis is true.In fact,if we can prove function $\log \zeta (\mathbf{B} + s)$  has singularities at $\rho  - \frac{1}{2}$ (where $\rho $ are zeros of $\xi (s)$),then Riemann Hypothesis is proved.
Set $s \to \rho ' - \frac{1}{2}$ in $(3.2)$,where $\rho '$ is a zero of $\xi (s)$,we have
\[\begin{array}{l}
 \log \zeta (\mathbf{B} + \rho ' - \frac{1}{2}) =  - \log \Pi (\frac{{\mathbf{B} + \rho ' - 1/2}}{2}) + \frac{{\mathbf{B} + \rho ' - 1/2}}{2}\log \pi  \\
  - \log (\rho ' - \frac{1}{2} + \mathbf{B} - 1) + \log \xi (\mathbf{B}) + \sum\limits_\rho  {\log (1 - \frac{{\rho ' - 1/2}}{{\rho  - \mathbf{B}}})}  \\
 \end{array}\]
 \[\begin{array}{l}
  =  - \log \Pi (\frac{{\mathbf{B} + \rho ' - 1/2}}{2}) + \frac{{\mathbf{B} + \rho ' - 1/2}}{2}\log \pi  \\
  - \log (\rho ' - \frac{1}{2} + \mathbf{B} - 1) + \log \xi (\mathbf{B}) + \sum\limits_\rho  {\log \frac{{\rho - \mathbf{B} - \rho ' + 1/2 }}{{ \rho - \mathbf{B}}}}   \\
 \end{array}.\]
 when $\rho  = \rho '$,the right of above equation will lead to a singular item $\log (1/2 - \mathbf{B})$ (But need a rigorous proof).
 Hence ${\mathop{\rm Re}\nolimits} {\kern 1pt} \rho ' = 1/2.$

\vspace{0.3in}
\bibliographystyle{amsplain}

\end{document}
